\newtheorem{thm}{Theorem}[section]
\newtheorem*{Problem}{Open Problem}
\newtheorem{lem}[thm]{Lemma}
\newtheorem{prop}[thm]{Proposition}
\newtheorem{cor}[thm]{Corollary}
\newtheorem{ex}[thm]{Example}
\theoremstyle{definition}
\newtheorem{defn}[thm]{Definition}
\theoremstyle{remark}
\newtheorem{rmk}[thm]{Remark}
\newtheorem*{rmk*}{Remark}
\begin{document}

\begin{center}

{\Large \bf Motzkin combinatorics in linear degenerations of the flag variety}
 \vspace{0.8cm} 

%shorttitle[Motzkin combinatorics in linear degenerations]{

 Giovanni Cerulli Irelli\\

Dipartimento S.B.A.I. \\

 Sapienza-Universit\`a di Roma\\
 
via Scarpa, 10, \\

00199 Roma, Italy\\

{\em giovanni.cerulliirelli@uniroma1.it } \\

 \vspace{0.5cm}

Francesco Esposito\\

Dipartimento di Matematica\\ 

Universit\`a degli Studi di Padova\\

via Trieste, 63\\ 

35121 Padova, Italy\\

{\em esposito@math.unipd.it } \\

 \vspace{0.5cm}
 
 Mario Marietti  \\

Dipartimento  di Ingegneria Industriale e Scienze Matematiche\\

Universit\`a Politecnica delle Marche\\
 
 Via Brecce Bianche \\
 
 60131 Ancona,  Italy\\
 
{ \em m.marietti@univpm.it}\\
 
\end{center}

\vspace{1cm}

\begin{abstract}
We establish an explicit combinatorial/homological characterization of supports for linear degenerations of flag varieties. For such purpose, we introduce the concept of an excessive multisegment. It provides a new class of combinatorial objects counted by Motzkin numbers.
\end{abstract}

\vspace{.5cm}

\noindent {\bf Keywords} Quiver representations, flag varieties, Motzkin numbers \\
{\bf Mathematics Subject Classification} Primary 14M15 · 05E10 ; Secondary 14D06 - 05A15

\vspace{.5cm}
\section{Introduction}
\label{sez1}

Motzkin numbers have been introduced in \cite{M} and are known to count a variety of different objects. For an extensive list of such instances, one may consult \cite{DS, OEIS}; one of the basic types of objects counted by Motzkin numbers are the so-called Motzkin paths. Recently, Motzkin paths have made an unexpected appearance in a topological/algebraic context in the work of X. Fang and M. Reineke \cite{FR}. 

To put Fang and Reineke's work into context, let us recall that the flag variety for $\mathrm{SL}_{n+1} (\mathbb{C} )$ may be seen as a quiver Grassmannian. Let $Q$ be the equioriented quiver of type $A_n$, and $\mathrm{R}_\mathbf{d}  $ be the variety of representations of $Q$ of dimension vector $\mathbf{d}   = (n+1, \dots, n+1)\in \mathbb{N} ^n$, together with a basis of the underlying graded vector space $\mathbb{C} ^\mathbf{d}  $. Denote by $I$ the projective-injective indecomposable representation of $Q$, and consider the representation $I^{\oplus n+1}$ in $\mathrm{R}_\mathbf{d}  $. If $\mathbf{e} $ is the dimension vector $(1,2,\ldots , n)\in \mathbb{N} ^n$, then the quiver Grassmannian $\mathrm{Gr} _\mathbf{e}  (I^{\oplus n+1})$ is the variety of complete flags of $\mathbb{C} ^{n+1}$. This interpretation of the flag variety allows to consider quiver Grassmannians of more degenerate representations of $Q$ with the same dimension vector as degenerations of the flag variety. Such degenerations are defined and studied in \cite{CFFFR} (following the work on the degenerate flag variety initiated by A. Feigin \cite{F}); they are called linear degenerations of the flag variety. These correspond to quiver Grassmannians of representations in the so-called flat irreducible locus $\mathcal{U}_n$; it consists of all representations of $Q$ that degenerate to a fixed isomorphism class of representations which is the sum of all injective indecomposables and all projective indecomposables, all with multiplicity one except the projective-injective indecomposable, which has multiplicity two. 

Let $X_{\mathbf{e} , \mathbf{d}  }$ be the incidence variety of pairs $(A, B)$, where $A$ is a subrepresentation of $B\in \mathrm{R}_\mathbf{d}  $ of dimension $\mathbf{e} $. The projection on the second factor induces a proper map $\pi_{\mathbf{e}  , \mathbf{d}  }: X_{\mathbf{e}  , \mathbf{d}  }\rightarrow \mathrm{R}_{\mathbf{d}  }$, which is equivariant with respect to the action of the automorphism group  $\mathrm{G}_{\mathbf{d}  }=\mathrm{Aut}(\mathbb{C} ^{\mathbf{d}  })=(\mathrm{GL}_{n+1}(\mathbb{C} ))^n$ of the graded vector space $\mathbb{C} ^\mathbf{d}  $.
An important observation is that the linear degenerations of a fixed flag variety are the fibers of the restriction of the universal quiver Grassmannian $\pi_{\mathbf{e}  , \mathbf{d}  }: X_{\mathbf{e}  , \mathbf{d}  }\rightarrow \mathrm{R}_{\mathbf{d}  }$ over the flat irreducible locus $\mathcal{U}_{n}$. So one gets a proper $\mathrm{G}_{\mathbf{d}  }$-equivariant map $\pi_n : X_n \rightarrow \mathcal{U}_{n}$, where $X_n$ is a smooth algebraic variety.

By the Beilinson-Bernstein-Deligne-Gabber Decomposition Theorem \cite{BBD}, the push-forward of the constant sheaf through $\pi_n$ splits as a direct sum of IC complexes with finite-dimensional graded vector spaces as multiplicities:

\begin{equation}\label{decomposizione}
    (\pi_n)_{\ast}\underline{\mathbb{Q}   }_{X_n}[\dim X_n] = \bigoplus_{\mathcal{O}\subset \mathcal{U}_n} IC(\overline{\mathcal{O}}, \mathbb{Q}   ) \otimes V_{\mathcal{O}}.
\end{equation}

The variation of the cohomology of linear degenerations of the flag variety is thus reduced, modulo the computation of the coefficients $V_{\mathcal{O}}$, to the knowledge of stalks of the IC complexes of orbit closures. A $\mathrm{G}_{\mathbf{d}  }$-orbit $\mathcal{O}$ is said to be a {\em support} of the map $\pi_n$ provided that $V_{\mathcal{O}}\neq 0$.

Following this line of reasoning, Fang and Reineke study the topology of linear degenerations in \cite{FR}.
Their fundamental result is that the supports of $\pi_n$ are parametrized by Motzkin paths of length $n$. The parametrization of supports is given by maximazing certain functions over configurations attached to a specific Motzkin path (see Definition~\ref{FRmax}). As is the case in general for parametrizations, it is a nontrivial problem, given a specific $\mathrm{G}_\mathbf{d}  $-orbit, to determine if it is a support, and in case determine the Motzkin path it corresponds to.

In the light of Fang and Reineke's result and the techniques employed, some natural questions arise. 

The fact that Motzkin paths parametrize the supports is quite unexpected and surprising, and the method of proof of such parametrization does not shed sufficient light on this question. 
Fang and Reineke translate the problem of determining the coefficients $V_{\mathcal{O}}$ and the supports in an algebraic computation in the positive part of the quantized universal enveloping algebra for $A_n$. It consists in the expansion of a certain product of PBW basis elements in terms of the canonical basis of the quantum group. Such computation is highly involved, but nonetheless Fang and Reineke succeed in extracting enough information to determine which coefficients are nonzero (i.e., the supports) by means of the Knight--Zelevinsky multisegment duality (see \cite{K-Z}).

Starting from these considerations, we investigate an alternative approach to the problem. We work with the combinatorial counterparts of the geometric objects and carry out a study of these, highlighting the combinatorial motivations of the appearance of Motzkin paths. To this end, we  algebraically encode the combinatorial properties of Motzkin paths and show that this same structure can be found also in multisegments, which are combinatorial counterparts to quiver representations. 
From this point of view, we recover the parametrization of Fang and Reineke as the unique map compatible with the defined algebraic structures.
One of the benefits of such analysis is the characterization of the image of the parametrization in terms of structural conditions of the single multisegment; specifically, a multisegment is a support if and only if it has no triples of segments that are linked (see Definition \ref{triple linkate}). A multisegment satisfying this property is called {\em excessive}.

Although our study is purely combinatorial, there is an underlying geometric idea, which also motivates the name excessive.   It is based on the fact that the cohomologies of the fibers of the universal quiver Grassmannian 
$\pi_{\mathbf{e}  , \mathbf{d}  }: X_{\mathbf{e}  , \mathbf{d}  }\rightarrow \mathrm{R}_{\mathbf{d}  }$
are not independent objects, but are connected by specialization maps, since they are the cohomology stalks of a complex of sheaves with constructible cohomology on $\mathrm R_\mathbf{d}  $.
Thence, the possibility to estimate the coefficients in terms of these specialization maps. It was proved that these specialization maps are all surjective
for linear degenerations by M. Lanini and E. Strickland in \cite{LS}, and for quiver Grassmannians in type $A$ in \cite{CEFF}. This is a strong form of upper semicontinuity: in these cases the cohomology may only grow by degenerations. Thus the kernel of a specialization map measures the difference between the cohomologies of the two fibers involved; it embodies the new cohomology classes that appear in the more degenerate fiber and are not present in the more generic fiber of the degeneration in question. The space of new cohomology classes is the intersection of all kernels of specialization maps. Fixing the decomposition (\ref{decomposizione}), one sees that this space contains the graded vector space $V_\mathcal{O}  $. This gives a necessary condition for a $\mathrm G_\mathbf{d}  $-orbit $\mathcal{O}  $ to be a support, namely that the new space in the cohomology of its fiber be nonzero. The results in this work imply that the condition, unexpectedly,  is as well sufficient. When translating this condition in combinatorial terms, one encovers the definition of excessive multisegment, in other words a multisegment having successor-closed subdiagrams appearing in no generization of it. In this paper, this underlying geometric viewpoint and its combinatorial version are used only as heuristics; the combinatorial study we present is formally independent from its heuristic origin.

We mention the following open problem.
\begin{Problem}
Characterize the coefficients $V_\mathcal{O}  $ (as opposed to just supports) in terms of specialization maps or its combinatorial counterpart.
\end{Problem}

This paper is structured as follows. In Section~\ref{sez2}, we provide the Motzkin combinatorics that is needed in the rest of the work.
In Section~\ref{sez3}, we highlight algebraic structures on the set of multisegments. Even though most of this can be done in much greater generality, we restrict our treatment to the set $\mathcal R$ of multisegments of weight $\mathbf{d}  =(n+1, \ldots, n+1)$, which corresponds to the union over $n$ of the $\mathrm G_{\mathbf{d}  }$-orbits of $\mathrm R_{\mathbf{d}  }$. In particular, we introduce and discuss the concepts of concatenation and suspension of multisegments.
In Section~\ref{sez4}, we study a submonoid $\mathcal M$ of $\mathcal R$, which corresponds to the union over $n$ of the flat irreducible loci $\mathcal U_n$. It turns out that this locus is very particular also from the combinatorial point of view: the concatenation and the suspension behave better than on  $\mathcal R$, due to the fact that a restriction map may be defined.
In Section~\ref{sez5}, we introduce and study a new class of multisegments, which we call {\em excessive} and form a submonoid $\mathcal E$ of $\mathcal M$, so that we have $\mathcal E \subseteq \mathcal M \subseteq \mathcal R$. In particular, we prove that the excessive multisegments of fixed length provide a new type of objects counted by  Motzkin numbers.
In Section~\ref{sez6}, we relate the combinatorial framework developed in the previous sections to the geometry of quiver representations and, in particular, to the topology of linear degenerations of flag varieties. The main result is that the set of excessive multisegments is exactly the set of supports. As another application of the machinery developed in the previous sections, we provide a simple and explicit procedure to invert Fang and Reineke's parametrization.

\section{Motzkin combinatorics}
\label{sez2}

In this section, we give some properties of a certain monoid associated to Motzkin paths, which plays a role in the next sections in relation to multisegments.

\begin{defn}
Let $n\in \mathbb N$. A Motzkin path $\gamma$ of length $n$ is a map $\gamma : [0,n] \rightarrow \mathbb{N}$ such that:
\begin{enumerate}[(i)]
    \item $\gamma(0) = \gamma(n) = 0$;
    \item for all $k\in [0,n]$, one has $\gamma(k) \geq 0$;
    \item for all $k \in [0, n-1]$, one has $ |\gamma(k) - \gamma(k+1)| \leq 1$.
\end{enumerate}
We denote the set of Motzkin paths of length $n$  by  $\mathcal{M}  ot_n$ and the graded set $\coprod_{n\in \mathbb{N} }\mathcal{M}  ot_n$ of all Motzkin paths by $\mathcal{M}  ot$.
\end{defn}
It is usual to identify  a Motzkin path $\gamma$ in $\mathcal{M}  ot_n$ with the sequence $(\gamma(0), \gamma(1), \ldots, \gamma(n))$ as well as with the lattice path in the integer plane $\mathbb N \times \mathbb N$ from $(0, 0)$ to $(n, 0)$ connecting $(i-1, \gamma(i-1) )$ to $(i, \gamma(i) )$, for all $i\in [1,n]$. 

\begin{ex}
The Motzkin path  $\gamma=(0,0,1,0,1,2,1,2,1,0,0)$ in $\mathcal{M}  ot_{10}$ is identified with the following lattice path: 
$$
\gamma=
\vcenter{\xymatrix@C=2pt@R=2pt{
&&&&&&&&&&&\\
&{\circ}&{\circ}&{\circ}&{\circ}&{\circ}&*-{\bullet}&{\circ}&*-{\bullet}&{\circ}&{\circ}&{\circ}\\
&{\circ}&{\circ}&*-{\bullet}&{\circ}&*-{\bullet}&{\circ}&*-{\bullet}&{\circ}&*-{\bullet}&{\circ}&{\circ}\\
\ar@{-}[rrrrrrrrrrr] 
\ar@{-}'[rr]'[rrru]'[rrrr]'[rrrrru]'[rrrrrruu]'[rrrrrrru]'[rrrrrrrruu]'[rrrrrrrrru]'[rrrrrrrrrr]'[rrrrrrrrrrr]&
\bullet&*-{\bullet}&{\circ}&*-{\bullet}&{\circ}&{\circ}&{\circ}&{\circ}&{\circ}&*-{\bullet}&*-{\bullet}\\
&\ar@{-}[uuuu]&&&&&&&&&&
}}
$$
\end{ex}

The cardinality of the set $\mathcal{M}  ot_n$ is the {\em $n$-th Motzkin number} \cite{OEIS}. Notice that $\mathcal{M}  ot_0$ and $\mathcal{M}  ot_1$ are singletons.

Given two non-negative integers $p$ and $q$, we consider the map $ \ast : \mathcal{M}  ot_p \times \mathcal{M}  ot_q \rightarrow \mathcal{M}  ot_{p+q}$ defined as follows: for $\gamma\in \mathcal{M}  ot_p$ and $\delta \in  \mathcal{M}  ot_q $, the Motzkin path $\gamma \ast \delta $ is defined as
$$\gamma \ast \delta (i)= \left\{
\begin{array}{ll}
\gamma(i), & \text{ if $i\in[0,p]$,} \\
\delta(i-p), & \text{  if  $i\in[p,p+q]$.}
\end{array}
\right.$$
The induced binary operation 
$\ast : \mathcal{M}  ot \times \mathcal{M}  ot \rightarrow \mathcal{M}  ot$ on $\mathcal{M}  ot$ is called \emph{concatenation}.
\begin{ex}
If $\gamma \in \mathcal{M}  ot_{10}$ and $\delta\in\mathcal{M}  ot_{6}$ are the following lattice paths
$$
\begin{array}{ccc}
\gamma=
\vcenter{\xymatrix@C=2pt@R=2pt{
&&&&&&&&&&&\\
&{\circ}&{\circ}&{\circ}&{\circ}&{\circ}&*-{\bullet}&{\circ}&*-{\bullet}&{\circ}&{\circ}&{\circ}\\
&{\circ}&{\circ}&*-{\bullet}&{\circ}&*-{\bullet}&{\circ}&*-{\bullet}&{\circ}&*-{\bullet}&{\circ}&{\circ}\\
\ar@{-}[rrrrrrrrrrr] 
\ar@{-}'[rr]'[rrru]'[rrrr]'[rrrrru]'[rrrrrruu]'[rrrrrrru]'[rrrrrrrruu]'[rrrrrrrrru]'[rrrrrrrrrr]'[rrrrrrrrrrr]&
*-{\bullet}&*-{\bullet}&{\circ}&*-{\bullet}&{\circ}&{\circ}&{\circ}&{\circ}&{\circ}&*-{\bullet}&*-{\bullet}\\
&\ar@{-}[uuuu]&&&&&&&&&&
}}
&
\quad \quad \quad 
&
\delta=
\vcenter{\xymatrix@C=2pt@R=2pt{
&&&&&&&\\
&{\circ}&{\circ}&*-{\bullet}&{\circ}&{\circ}&{\circ}&{\circ}\\
&{\circ}&*-{\bullet}&{\circ}&*-{\bullet}&*-{\bullet}&{\circ}&{\circ}\\
\ar@{-}[rrrrrrr] 
\ar@{-}'[r]'[rru]'[rrruu]'[rrrru]'[rrrru]'[rrrrru]'[rrrrrr]'[rrrrrrr]&
*-{\bullet}&{\circ}&{\circ}&{\circ}&{\circ}&*-{\bullet}&*-{\bullet}\\
&\ar@{-}[uuuu]&&&&&&
%{\tiny{0}}&{\small 1}&{\circ}&{\circ}&{\circ}&{\circ}&{\circ}&{\circ}&{\circ}&{\circ}&{\circ}
}}
\end{array}
$$
then the concatenations $\gamma\ast \delta$ and $ \delta\ast\gamma$ in  $\mathcal{M}  ot_{16}$ are  the following lattice paths:
$$
\gamma\ast\delta=
\vcenter{\xymatrix@C=2pt@R=2pt{
&&&&&&&&&&&
&&&&&&\\
&{\circ}&{\circ}&{\circ}&{\circ}&{\circ}&*-{\bullet}&{\circ}&*-{\bullet}&{\circ}&{\circ}&{\circ}
&{\circ}&*-{\bullet}&{\circ}&{\circ}&{\circ}&{\circ}\\
&{\circ}&{\circ}&*-{\bullet}&{\circ}&*-{\bullet}&{\circ}&*-{\bullet}&{\circ}&*-{\bullet}&{\circ}&{\circ}
&*-{\bullet}&{\circ}&*-{\bullet}&*-{\bullet}&{\circ}&{\circ}\\
\ar@{-}[rrrrrrrrrrrrrrrrr] 
\ar@{-}'[rr]'[rrru]'[rrrr]'[rrrrru]'[rrrrrruu]'[rrrrrrru]'[rrrrrrrruu]'[rrrrrrrrru]'[rrrrrrrrrr]'[rrrrrrrrrrr]'[rrrrrrrrrrrru]
'[rrrrrrrrrrrrruu]'[rrrrrrrrrrrrrru]'[rrrrrrrrrrrrrrru]'[rrrrrrrrrrrrrrrr]'[rrrrrrrrrrrrrrrrr]&
*-{\bullet}&*-{\bullet}&{\circ}&*-{\bullet}&{\circ}&{\circ}&{\circ}&{\circ}&{\circ}&*-{\bullet}&*-{\bullet} 
&{\circ}&{\circ}&{\circ}&{\circ}&*-{\bullet}&*-{\bullet}\\
&\ar@{-}[uuuu]&&&&&&&&&&
&&&&&&
%{\tiny{0}}&{\small 1}&{\circ}&{\circ}&{\circ}&{\circ}&{\circ}&{\circ}&{\circ}&{\circ}&{\circ}
}}
$$
$$
\delta\ast\gamma=
\vcenter{\xymatrix@C=2pt@R=2pt{
&&&&&&&
&&&&&&&&&&\\
&{\circ}&{\circ}&*-{\bullet}&{\circ}&{\circ}&{\circ}&
{\circ}&{\circ}&{\circ}&{\circ}&{\circ}&*-{\bullet}&{\circ}&*-{\bullet}&{\circ}&{\circ}&{\circ}\\
&{\circ}&*-{\bullet}&{\circ}&*-{\bullet}&*-{\bullet}&{\circ}&
{\circ}&{\circ}&*-{\bullet}&{\circ}&*-{\bullet}&{\circ}&*-{\bullet}&{\circ}&*-{\bullet}&{\circ}&{\circ}\\
\ar@{-}[rrrrrrrrrrrrrrrrr] 
\ar@{-}'[r]'[rru]'[rrruu]'[rrrru]'[rrrru]'[rrrrru]'[rrrrrr]'[rrrrrrr]'[rrrrrrrr]'[rrrrrrrrru]'[rrrrrrrrrr]'[rrrrrrrrrrru]'[rrrrrrrrrrrruu]'[rrrrrrrrrrrrur]'[rrrrrrrrrrrrrruu]'[rrrrrrrrrrrrrrru]'[rrrrrrrrrrrrrrrr]'[rrrrrrrrrrrrrrrrr]&
*-{\bullet}&{\circ}&{\circ}&{\circ}&{\circ}&*-{\bullet}&
%\ar@{-}'[r]'[rru]'[rrr]'[rrrru]'[rrrrruu]'[rrrrrru]'[rrrrrrruu]'[rrrrrrrru]'[rrrrrrrrr]'[rrrrrrrrrr]
*-{\bullet}&*-{\bullet}&{\circ}&*-{\bullet}&{\circ}&{\circ}&{\circ}&{\circ}&{\circ}&*-{\bullet}&*-{\bullet}\\
&\ar@{-}[uuuu]&&&&&&&&&&&&&&&&
}}
$$
\end{ex}

The set $\mathcal M ot$ has also a unary operation
$S: \mathcal{M}  ot \rightarrow \mathcal{M}  ot$, which we call suspension, defined as follows: given $\gamma\in \mathcal{M}  ot_n$,  the Motzkin path $S(\gamma)\in\mathcal{M}  ot_{n+2}$ satisfies 
$$S(\gamma) (i)= \left\{
\begin{array}{ll}
0, & \text{ if $i=0$ or $i=n+2$,} \\
\gamma(i-1) +1, & \text{ if $i\in[1,n+1]$.}
\end{array}
\right.$$
 
\begin{ex}
The suspension of the Motzkin path $\gamma\in\mathcal{M}  ot_{10}$ shown on the left is the Motzkin path $S(\gamma)\in\mathcal{M}  ot_{12}$ shown on the right:
$$
\begin{array}{cccc}
\gamma=
\vcenter{\xymatrix@C=2pt@R=2pt{
&&&&&&&&&&&\\
&{\circ}&{\circ}&{\circ}&{\circ}&{\circ}&*-{\bullet}&{\circ}&*-{\bullet}&{\circ}&{\circ}&{\circ}\\
&{\circ}&{\circ}&*-{\bullet}&{\circ}&*-{\bullet}&{\circ}&*-{\bullet}&{\circ}&*-{\bullet}&{\circ}&{\circ}\\
\ar@{-}[rrrrrrrrrrr] 
\ar@{-}'[rr]'[rrru]'[rrrr]'[rrrrru]'[rrrrrruu]'[rrrrrrru]'[rrrrrrrruu]'[rrrrrrrrru]'[rrrrrrrrrr]'[rrrrrrrrrrr]&
*-{\bullet}&*-{\bullet}&{\circ}&*-{\bullet}&{\circ}&{\circ}&{\circ}&{\circ}&{\circ}&*-{\bullet}&*-{\bullet}\\
&\ar@{-}[uuuu]&&&&&&&&&&
}}
&
&
&
S(\gamma)=
\vcenter{\xymatrix@C=2pt@R=2pt{
&&&&&&&&&&&&\\
&{\circ}&{\circ}&{\circ}&{\circ}&{\circ}&{\circ}&*-{\bullet}&{\circ}&*-{\bullet}&{\circ}&{\circ}\\
&{\circ}&{\circ}&{\circ}&*-{\bullet}&{\circ}&*-{\bullet}&{\circ}&*-{\bullet}&{\circ}&*-{\bullet}&{\circ}&{\circ}\\
&{\circ}
&*-{\bullet}&*-{\bullet}&{\circ}&*-{\bullet}&{\circ}&{\circ}&{\circ}&{\circ}&{\circ}&*-{\bullet}&*-{\bullet}&\\
\ar@{-}[rrrrrrrrrrrrr] &*-{\bullet}\ar@{-}'[ur]'[urr]'[rrruu]'[urrrr]'[urrrrru]'[urrrrrruu]'[urrrrrrru]'[urrrrrrrruu]'[urrrrrrrrru]'[urrrrrrrrrr]'[urrrrrrrrrrr]'[rrrrrrrrrrrr]&&&&&&&&&&&&*-{\bullet}\\
&\ar@{-}[uuuuu]&&&&&&&&&&&&
}}
\end{array}
$$
\end{ex}
A Motzkin path $\gamma$ in $\mathcal{M}  ot_n$ is not a concatenation of two smaller paths if and only if $\gamma(i)=0$ only for $i=0$ or $i=n$. In this case,    $\gamma(i)\geq 1$ for $i\in [1,n-1]$, and hence  $\gamma$ is the suspension  of a Motzkin path of length $n-2$, if $n\geq 2$.

Recall that a monoid $\mathfrak{M}$ (i.e., a set with an associative binary operation that has a neutral element) is {\em  graded} once it has a homomorphism $\mathfrak{M} \rightarrow \mathbb{N} $. The elements of $\mathfrak{M}$ that map to $n\in \mathbb{N} $  are said to be of degree $n$. We denote by $\mathfrak{M}_n$ the subset of $\mathfrak{M}$ containing all elements of degree $n$. We call an element $m$ of a monoid $\mathfrak{M}$ {\em primitive} if it is not a product of two elements in a nontrivial way.

\begin{prop}\label{Motzkin libero}
The set  $\mathcal{M}  ot$, equipped with the concatenation map, is a free graded monoid on the set $S(\mathcal{M}  ot) \coprod \mathcal{M}  ot_1$. Moreover, suspension is injective.
\end{prop}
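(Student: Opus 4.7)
The plan is to establish the free monoid property via the standard recipe: identify the primitive elements (those not decomposable as nontrivial concatenations), prove unique factorization into primitives, and verify that the set of primitives is exactly $S(\mathcal{M}ot) \coprod \mathcal{M}ot_1$. The injectivity of $S$ will be a short separate argument at the end.

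For the factorization step, I will decompose a Motzkin path at its zeros. Given $\gamma \in \mathcal{M}ot_n$ with $n \geq 1$, let $0 = i_0 < i_1 < \cdots < i_k = n$ enumerate the indices where $\gamma$ vanishes. Setting $\gamma_j(t) := \gamma(i_{j-1} + t)$ for $t \in [0, i_j - i_{j-1}]$ yields blocks $\gamma_j \in \mathcal{M}ot_{i_j - i_{j-1}}$ with $\gamma = \gamma_1 \ast \cdots \ast \gamma_k$, and each $\gamma_j$ vanishes only at its endpoints, hence is primitive by the paragraph immediately preceding the proposition. For uniqueness, I will observe that in any factorization $\gamma = \delta_1 \ast \cdots \ast \delta_m$ with each $\delta_h$ primitive of positive length, the break indices $j_h := |\delta_1| + \cdots + |\delta_h|$ are necessarily zeros of $\gamma$, while the strict positivity of each $\delta_h$ on its interior forces the break indices to include \emph{every} zero of $\gamma$. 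This gives $\{j_0, \ldots, j_m\} = \{i_0, \ldots, i_k\}$, hence $m = k$ and $\delta_h = \gamma_h$.

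Next I will identify the primitive set. From the same remark preceding the proposition, a primitive path in $\mathcal{M}ot_n$ for $n \geq 2$ has the form $S(\eta)$ for a \emph{unique} $\eta \in \mathcal{M}ot_{n-2}$, while for $n = 1$ the only primitive (indeed, the only path at all) is the unique element of $\mathcal{M}ot_1$. So the set of primitives of positive degree is $S(\mathcal{M}ot) \cup \mathcal{M}ot_1$, and this union is disjoint because every path in $S(\mathcal{M}ot)$ has length at least $2$. For suspension injectivity, if $S(\gamma) = S(\delta)$ then the lengths of $\gamma$ and $\delta$ must coincide, and the defining formula immediately yields $\gamma(i) = S(\gamma)(i+1) - 1 = S(\delta)(i+1) - 1 = \delta(i)$ for every $i$.

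The main obstacle is essentially nil: the conceptual content has been telegraphed by the paragraph preceding the proposition, which pins down primitives as either the length-one path or suspensions. What remains is the bookkeeping of decomposition at zeros and its uniqueness. The only minor trap I will watch out for is to present the generating set as a disjoint union — the length argument separating $\mathcal{M}ot_1$ from $S(\mathcal{M}ot)$ — so that the universal property of a free monoid genuinely applies and the uniqueness assertion for $\eta$ in the definition of primitives of length $\geq 2$ lines up with the injectivity of $S$ which we verify at the end.
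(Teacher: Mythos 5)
Your proof is correct and follows the same route the paper takes (which it leaves largely to the reader): decompose a path at its zeros to get existence and uniqueness of factorization into primitives, invoke the remark preceding the proposition to identify the primitives with $S(\mathcal{M}ot) \coprod \mathcal{M}ot_1$, and read off injectivity of $S$ from the defining formula. You have simply filled in the bookkeeping that the paper declares straightforward.
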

\begin{proof}
It is straightforward that concatenation is associative and the unique element in $\mathcal{M}  ot_0$ is the identity. Moreover, every Motzkin path can be uniquely written as a concatenation of elements in  $S(\mathcal{M}  ot) \coprod \mathcal{M}  ot_1$. Injectivity of suspension follows directly from the definition.
\end{proof}

One may say that the following universal property of $\mathcal{M}  ot$ is the reason for the many occurrences of Motzkin numbers in algebraic combinatorics. 

\begin{prop}
\label{universalita}
Let $\mathfrak{M}= \coprod \mathfrak{M}_n$ be a free graded monoid with $\mathfrak{M}_1$ being a singleton and with a map  $S: \mathfrak{M} \rightarrow \mathfrak{M}$ of degree 2 (which we also call suspension). 
\begin{enumerate}[(i)]
    \item 
    \label{univ1}
    There exists a unique graded homomorphism of monoids
$\Phi: \mathcal{M}  ot \rightarrow \mathfrak{M}$ that commutes with the suspension maps. 
\item 
\label{univ2}
If the suspension of $\mathfrak{M}$ is injective and  its image is contained in the set of primitive elements of $\mathfrak{M}$, then $\Phi$ is injective.

\item 
\label{univ3}
The homomorphism  $\Phi$ is bijective if and only if $S(\mathfrak{M}) \coprod \mathfrak{M}_1$  is exactly the set of primitive elements of  $\mathfrak{M}$.
\end{enumerate}
\end{prop}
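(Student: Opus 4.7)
The approach exploits Proposition~\ref{Motzkin libero}, which states that $\mathcal{M}ot$ is freely generated as a graded monoid by $S(\mathcal{M}ot)\coprod\mathcal{M}ot_1$, with suspension injective. For part~(i), I define $\Phi$ on generators. On the singleton $\mathcal{M}ot_1$, gradedness forces $\Phi$ to send its element to the unique generator of $\mathfrak{M}_1$. On $S(\mathcal{M}ot)$, compatibility with suspension dictates the recursive prescription $\Phi(S(\gamma))=S(\Phi(\gamma))$, which is well-founded by induction on the length of $\gamma$ (using injectivity of $S$ on $\mathcal{M}ot$), and extends uniquely to a graded monoid homomorphism by the universal property of the free monoid. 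Uniqueness is immediate since any such homomorphism is forced on the generating set.

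For part~(ii), I reduce injectivity of $\Phi$ to injectivity on primitives via unique factorization in both free monoids. First, the hypotheses guarantee that $\Phi$ carries primitives of $\mathcal{M}ot$ to primitives of $\mathfrak{M}$: the element of $\mathcal{M}ot_1$ maps to the element of $\mathfrak{M}_1$, which is primitive by degree, and $\Phi(S(\mathcal{M}ot))=S(\Phi(\mathcal{M}ot))\subseteq S(\mathfrak{M})$ lies in the primitives by assumption. If $\Phi(\gamma)=\Phi(\delta)$, the unique primitive factorizations in $\mathfrak{M}$ must agree position-wise, so it suffices to show that $\Phi$ is injective on primitives. Primitives of distinct kinds have image in different degrees, so gradedness handles that case; for two primitive suspensions with $\Phi(S(\gamma'))=\Phi(S(\delta'))$, compatibility gives $S(\Phi(\gamma'))=S(\Phi(\delta'))$, whence injectivity of $S$ on $\mathfrak{M}$ yields $\Phi(\gamma')=\Phi(\delta')$, and induction on length gives $\gamma'=\delta'$.

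For part~(iii), the forward direction follows from the intrinsic characterization of primitives as elements with no nontrivial factorization: any bijective graded monoid homomorphism between free monoids sends primitives bijectively to primitives, so the primitives of $\mathfrak{M}$ coincide with $\Phi(S(\mathcal{M}ot))\cup\Phi(\mathcal{M}ot_1)=S(\mathfrak{M})\cup\mathfrak{M}_1$, the union being disjoint by degree considerations. For the backward direction, surjectivity is proved by induction on degree: the $\mathfrak{M}_1$ generator, and any suspension $S(m)$ of an element $m$ already in the image, lie in the image; meanwhile non-primitive elements decompose into strictly-smaller-degree primitives already in the image. Injectivity is then obtained by invoking part~(ii), once one extracts from the hypothesis that $S$ is injective on $\mathfrak{M}$. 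I expect this last extraction to be the most delicate point: the statement that $S(\mathfrak{M})\coprod\mathfrak{M}_1$ \emph{equals} the set of primitives has to be interpreted (via the disjoint union notation) as providing a set-theoretic bijection between $\mathfrak{M}\sqcup\mathfrak{M}_1$ and the primitives, from which injectivity of $S$ follows and the hypothesis of~(ii) is verified.
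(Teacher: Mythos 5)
Your argument is correct and follows the same strategy as the paper's own proof: gradedness and commutation with $S$ force $\Phi$ on the singleton $\mathcal{M}ot_1$ and on $S(\mathcal{M}ot)$, freeness of $\mathcal{M}ot$ on $S(\mathcal{M}ot)\coprod\mathcal{M}ot_1$ (Proposition~\ref{Motzkin libero}) then extends this uniquely to a graded homomorphism, and unique factorization into primitives in both free monoids yields (ii) and (iii). Your elaboration of (ii), reducing injectivity of $\Phi$ to injectivity on primitives and then splitting by degree and inducting through $S$, is a fuller version of what the paper dispatches in one line, and your degree-induction proof of surjectivity in (iii) is equivalent to the paper's minimal-counterexample argument.

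The delicate point you flag at the end of (iii) is genuine and worth making explicit. The paper opens the proof of (iii) with the assertion that $\Phi$ is bijective iff every primitive of $\mathfrak{M}$ lies in the image, and then argues only for surjectivity; but surjectivity does not by itself give injectivity. If $S$ on $\mathfrak{M}$ failed to be injective, say $S(a)=S(b)$ with $a\neq b$, one could pull $a,b$ back along the (already established) surjection $\Phi$ to distinct $\alpha,\beta\in\mathcal{M}ot$, and then $\Phi(S(\alpha))=S(a)=S(b)=\Phi(S(\beta))$ while $S(\alpha)\neq S(\beta)$, contradicting injectivity of $\Phi$. So, as you observe, the hypothesis ``$S(\mathfrak{M})\coprod\mathfrak{M}_1$ equals the set of primitives'' must be read as asserting that the natural map $\mathfrak{M}\sqcup\mathfrak{M}_1\to\mathfrak{M}$ given by $S$ on the first summand and inclusion on the second is a bijection onto the primitives; this encodes injectivity of $S$, after which part (ii) completes the argument. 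In the paper's actual application to $\mathcal E$, injectivity of $S$ is established separately (Proposition~\ref{libero graduato R}), so this stronger reading is consistent with how the result is used.
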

\begin{proof}
(\ref{univ1}). 
Let us inductively construct a graded homomorphism of monoids $\Phi=(\Phi_n): \mathcal{M}ot \to \mathfrak M$ commuting with suspension. 
There is only one map $\Phi_1$ from $\mathcal{M}  ot_1$ to $\mathfrak{M}_1$ since they are one-element sets. Let $n>1$ and suppose that we have already constructed $\Phi_i$ for every $i\in [0,n-1]$. Let us construct $\Phi_{n}$. Let $\gamma \in \mathcal{M}  ot_n$. 
By Proposition~\ref{Motzkin libero}, the path $\gamma $ can be uniquely written as a concatenation of Motzkin paths in  $S(\mathcal{M}  ot) \coprod \mathcal{M}  ot_1$. Thus one has exactly one choice for $\Phi_n(\gamma)$ since $\mathcal{M}  ot$ is free and its suspension is injective. Hence, the  unicity of $\Phi_n$ follows. All together, the maps $\Phi_n$, for $n\in \mathbb N$, determine a map $\Phi: \mathcal{M}ot \to \mathfrak M$ that is a homomorphism of monoids by the unicity of the factorization in $\mathcal{M}ot$; moreover, it is graded and commutes with suspension by construction. 

(\ref{univ2}). 
If the suspension of $\mathfrak{M}$ is injective with image contained in the set of primitives, then  injectivity of $\Phi_n$ follows from the facts that $\mathfrak{M}$ is free. 

(\ref{univ3}). 
The homomorphism $\Phi$ is bijective if and only if all primitive elements of $\mathfrak{M}$ are in the image of $\Phi$.  The set of primitive elements of $\mathcal{M} ot$ is $S(\mathcal{M}  ot) \coprod \mathcal{M}  ot_1$. By the properties of $\Phi$,  the image of a  primitive element is primitive and contained in $S(\mathfrak{M}) \coprod \mathfrak{M}_1$. Thus, if $\Phi$ is surjective, then $S(\mathfrak{M}) \coprod \mathfrak{M}_1$ is precisely the set of primitive elements. Viceversa, suppose that $S(\mathfrak{M}) \coprod \mathfrak{M}_1$ is the set of all primitive elements and, toward a contradiction, let $x\in\mathfrak{M}$ be a primitive element of minimal degree $n$ that does not belong to the image of $\Phi$. Then $x\in S(\mathfrak{M})$ and hence there exists  $y$ of degree $n-2$ such that $x=S(y)$. Then $y$ is a product of primitive elements of lower degree and hence $y$ belongs to the image of $\Phi$, say $y=\Phi(z)$. Then $x=S(y)=S(\Phi(z))=\Phi(S(z))$, which is a contradiction.
\end{proof}

\begin{rmk}
\label{cat}
The combinatorics of Catalan numbers may be handled in  analogous algebraic terms with a monoid $\mathcal Cat$  by replacing $S(\mathcal{M}  ot) \coprod \mathcal{M}  ot_1$ with simply $S(\mathcal Cat)$.
\end{rmk}

\section{Multisegments of weight $\mathbf{d}  $}
\label{sez3}

In this section, we highlight algebraic structures on the set of multisegments, which, to our knowledge, have not received attention prior to this work. Even though several of the following concepts can be introduced in much greater generality, we restrict our treatment to the multisegments of weight $\mathbf{d}  =(n+1, \ldots, n+1) \in \mathbb N^{n}$. Then, in the following sections, we give additional properties holding only on proper subsets of these multisegments.

Let us fix the notation. Given $i,j \in \mathbb N$, with $i\leq j$, we denote the interval of integers between $i$ and $j$, where $i$ and $j$ are included, by $[i,j]$. Following \cite{K-Z}, we call any such interval a {\em segment} and a multiset of segments a {\em multisegment}. The segment $[i,j]$ is said to begin at $i$ and end at $j$; it contains the column $k$ provided that $i \leq k < j$.

Let $n\in \mathbb N$, with $n\geq 1$. A multisegment over $[1,n]$ is a multisegment whose segments are contained in $[1,n]$. The {\em weight} of a multisegment $M$ over  $[1,n]$ is the sequence  $w(M)=(w_1(M), \ldots, w_n(M))$,  
where $w_k(M)$ is the number of segments of $M$ that contain $k$. The $k$-th column of a multisegment is the restriction of the multisegment to $[k,k+1]$. The $k$-th column of a multisegment is said to be {\em full} if such restriction contains the segment $[k, k+1]$ with multiplicity $\min \{ w(k), w(k+1)\}$; the multisgment is said to have $i$ {\em cuts} at column $k$ if such restriction contains the segment $[k, k+1]$ with multiplicity $\min \{ w(k), w(k+1)\} - i$.
\begin{ex}
As usual, we visualize a multisegment as an unordered bunch of strings. For example, the multisegment $\{[1,3], [1,1], [2,3], [2,4], [3,4], [3,3], [4,4], [4,4]\}$ is visualized as
$$
\xymatrix@R=0pt{
*{\circ}\ar@{-}[r]&*{\circ}\ar@{-}[r]&*{\circ}&*{\circ}\\
 *{\circ}             &*{\circ}\ar@{-}[r]&*{\circ}&*{\circ}\\
 &              &*{\circ}\ar@{-}[r]&*{\circ}\\
               &*{\circ}\ar@{-}[r]&*{\circ}\ar@{-}[r]&*{\circ}\\ 
               &&*{\circ}&
}
$$
It has weight $(2,3,5,4)$, one cut in the first column, no cuts in the second column and two cuts in the third column.
\end{ex}
For $n\geq 1$, we denote the set of multisegments over $[1,n]$ of weight $\mathbf{d}  $ by $\mathcal{R}_n$. 
Let $\mathcal R_0$ be the singleton  containing  the empty multisegment and denote the union $\coprod_{n\geq 0} \mathcal{R}_n$ by $\mathcal{R}$.

We now define two operations on the set $\mathcal{R}$, which we call concatenation and suspension. We denote by $\uplus$ the multiset sum: for example $\{x,x,x,y,z\} \uplus \{x,y,y\}=\{x,x,x,x,y,y,y,z\}$.

\begin{defn}
\label{3.1}
Let $M,N\in \mathcal{R}$, with $M\in \mathcal{R}_p$ and $N\in \mathcal{R}_q$. The concatenation of $M$ and $N$, denoted $M\ast N$,  is defined as follows. If $p\neq 0$ and $q\neq 0$, then  $M\ast N$ is the multisegment  in $\mathcal{R}_{p+q}$ containing the following segments:
\begin{enumerate}
        \item $[i,j]$ for each $[i,j]$ in $M$ with $j<p$,
        \item $[i+p,j+p]$ for each $[i,j]$ in $N$ with $i>1$,
        \item $[i_k, j_k+p]$ for each  $k=1,\ldots, p+q+1$, where 
        \begin{itemize}
            \item 
        $[i_1, p], \ldots [i_{p+q+1}, p]$, with $i_1 \geq i_2 \geq \cdots \geq i_{p+q+1} $,  are the segments of the multisegment $M \uplus \{ \underbrace{[1,p], [1,p], \ldots, [1,p]}_{q \text{ times}}  \}$  ending at $p$, 
        \item $[1, j_1], \ldots [1,j_{p+q+1}]$, with $j_1 \leq j_2 \leq \cdots \leq j_{p+q+1} $, are the segments of the multisegment $N \uplus \{ \underbrace{[1,q], [1,q], \ldots, [1,q]}_{p \text{ times}}  \}$  starting at $1$.
        \end{itemize}
        \end{enumerate}
        If $q=0$ (respectively,  $p=0$), then  $M\ast N=M$ (respectively, $M\ast N=N$).
       
\end{defn}

\begin{ex}
Let $M\in\mathcal{R}_3$ and $N\in\mathcal{R}_4$ be the following multisegments:
$$
\begin{array}{ccc}
M=\vcenter{\xymatrix@R=0pt{
*{\circ}\ar@{-}[r]&*{\circ}\ar@{-}[r]&*{\circ}\\
*{\circ}&*{\circ}\ar@{-}[r]&*{\circ}\\
*{\circ}\ar@{-}[r]&*{\circ}&*{\circ}\\
*{\circ}\ar@{-}[r]&*{\circ}\ar@{-}[r]&*{\circ}
}}
&\textrm{ \quad \quad \quad  }&
N=
\vcenter{
\xymatrix@R=0pt{
*{\circ}\ar@{-}[r]&*{\circ}\ar@{-}[r]&*{\circ}\ar@{-}[r]&*{\circ}\\
*{\circ}\ar@{-}[r]&*{\circ}\ar@{-}[r]&*{\circ}\ar@{-}[r]&*{\circ}\\
*{\circ}&*{\circ}\ar@{-}[r]&*{\circ}&*{\circ}\\
*{\circ}\ar@{-}[r]&*{\circ}\ar@{-}[r]&*{\circ}&*{\circ}\\
*{\circ}&*{\circ}\ar@{-}[r]&*{\circ}\ar@{-}[r]&*{\circ}
}}
\end{array}
$$
The concatenation $M\ast N$ and the concatenation $N\ast M$ are obtained by first adding four rows to $M$ and three rows to $N$, and then by orderly ``gluing'' $M$ and $N$ along a new full column (for convenience of the reader we use the dotted lines in the new column):
$$
M\ast N=\vcenter{\xymatrix@R=0pt{
*{\circ}\ar@{-}[r]&*{\circ}&*{\circ}
\ar@{..}[r]
&*{\circ}&*{\circ}\ar@{-}[r]&*{\circ}\ar@{-}[r]&*{\circ}\\
*{\circ}&*{\circ}\ar@{-}[r]&*{\circ}\ar@{..}[r]&*{\circ}&*{\circ}\ar@{-}[r]&*{\circ}&*{\circ}\\
*{\circ}\ar@{-}[r]&*{\circ}\ar@{-}[r]&*{\circ}\ar@{..}[r]&*{\circ}\ar@{-}[r]&*{\circ}\ar@{-}[r]&*{\circ}&*{\circ}\\
*{\circ}\ar@{-}[r]&*{\circ}\ar@{-}[r]&*{\circ}\ar@{..}[r]&*{\circ}\ar@{-}[r]&*{\circ}\ar@{-}[r]&*{\circ}\ar@{-}[r]&*{\circ}\\
*{\circ}\ar@{-}[r]&*{\circ}\ar@{-}[r]&*{\circ}\ar@{..}[r]&*{\circ}\ar@{-}[r]&*{\circ}\ar@{-}[r]&*{\circ}\ar@{-}[r]&*{\circ}\\
*{\circ}\ar@{-}[r]&*{\circ}\ar@{-}[r]&*{\circ}\ar@{..}[r]&*{\circ}\ar@{-}[r]&*{\circ}\ar@{-}[r]&*{\circ}\ar@{-}[r]&*{\circ}\\
*{\circ}\ar@{-}[r]&*{\circ}\ar@{-}[r]&*{\circ}\ar@{..}[r]&*{\circ}\ar@{-}[r]&*{\circ}\ar@{-}[r]&*{\circ}\ar@{-}[r]&*{\circ}\\
*{\circ}\ar@{-}[r]&*{\circ}\ar@{-}[r]&*{\circ}\ar@{..}[r]&*{\circ}\ar@{-}[r]&*{\circ}\ar@{-}[r]&*{\circ}\ar@{-}[r]&*{\circ}
}}
$$
and 
$$
N\ast M=\vcenter{\xymatrix@R=0pt{
*{\circ}&*{\circ}\ar@{-}[r]&*{\circ}&*{\circ}\ar@{..}[r]&*{\circ}&*{\circ}\ar@{-}[r]&*{\circ}\\
*{\circ}\ar@{-}[r]&*{\circ}\ar@{-}[r]&*{\circ}&*{\circ}\ar@{..}[r]&*{\circ}\ar@{-}[r]&*{\circ}&*{\circ}\\
*{\circ}&*{\circ}\ar@{-}[r]&*{\circ}\ar@{-}[r]&*{\circ}\ar@{..}[r]&*{\circ}\ar@{-}[r]&*{\circ}\ar@{-}[r]&*{\circ}\\
*{\circ}\ar@{-}[r]&*{\circ}\ar@{-}[r]&*{\circ}\ar@{-}[r]&*{\circ}\ar@{..}[r]&*{\circ}\ar@{-}[r]&*{\circ}\ar@{-}[r]&*{\circ}\\
*{\circ}\ar@{-}[r]&*{\circ}\ar@{-}[r]&*{\circ}\ar@{-}[r]&*{\circ}\ar@{..}[r]&*{\circ}\ar@{-}[r]&*{\circ}\ar@{-}[r]&*{\circ}\\
*{\circ}\ar@{-}[r]&*{\circ}\ar@{-}[r]&*{\circ}\ar@{-}[r]&*{\circ}\ar@{..}[r]&*{\circ}\ar@{-}[r]&*{\circ}\ar@{-}[r]&*{\circ}\\
*{\circ}\ar@{-}[r]&*{\circ}\ar@{-}[r]&*{\circ}\ar@{-}[r]&*{\circ}\ar@{..}[r]&*{\circ}\ar@{-}[r]&*{\circ}\ar@{-}[r]&*{\circ}\\
*{\circ}\ar@{-}[r]&*{\circ}\ar@{-}[r]&*{\circ}\ar@{-}[r]&*{\circ}\ar@{..}[r]&*{\circ}\ar@{-}[r]&*{\circ}\ar@{-}[r]&*{\circ}
}}
$$
\end{ex}
\begin{rmk}
\label{inclusi uno nellaltro}
Let $ M\in \mathcal{R}_p$ and $N\in \mathcal{R}_q$. The segments of $M\ast N$ containing $\{p,p+1\}$ are linearly ordered by inclusion: the first such segment is obtained by putting together the shortest segment of $M$ among those ending at $p$ with the shortest segment of $N$ among those starting at $1$, and so on.
\end{rmk}

\begin{defn}
\label{sospensione}
Let $M \in \mathcal{R}_n$. The suspension of $M$, denoted $S(M)$, is defined as follows. If $n>0$, then $S(M)$ is the multisegment of $\mathcal{R}_{n+2}$ containing the following segments:
\begin{enumerate}
    \item $[i+1, j+1]$ for each $[i, j]\in M$ with $1 < i \leq j < n$,
    \item $[1, j+1]$ for each $[1, j]\in M$ with $j < n$, 
    \item $[i+1, n+2]$ for each $[i, n]\in M$ with $i > 1$,
    \item $[1, n+2]$ for each $[1,n]\in M$, 
    \item
    \label{sospensione5}
    $[1,1]$, $[2, n+2]$, $[1, n+1]$, and $[n+2, n+2]$.
\end{enumerate}
If $n=0$, the suspension of the empty multisegment  is the multisegment $\{[1,1], [2, 2], [1,2]\}$ of $\mathcal{R}_{2}$.

\end{defn}

\begin{ex} The suspension of a multisegment $N$ is obtained by adding two columns and two rows (which we depict in dotted lines for convenience of the reader):
$$
\begin{array}{ccc}
N=
\vcenter{
\xymatrix@R=0pt{
*{\circ}\ar@{-}[r]&*{\circ}\ar@{-}[r]&*{\circ}\ar@{-}[r]&*{\circ}\\
*{\circ}\ar@{-}[r]&*{\circ}\ar@{-}[r]&*{\circ}\ar@{-}[r]&*{\circ}\\
*{\circ}&*{\circ}\ar@{-}[r]&*{\circ}&*{\circ}\\
*{\circ}\ar@{-}[r]&*{\circ}\ar@{-}[r]&*{\circ}&*{\circ}\\
*{\circ}&*{\circ}\ar@{-}[r]&*{\circ}\ar@{-}[r]&*{\circ}\\
&&&
}}
&
\quad \quad \quad
&
S(N)=
\vcenter{
\xymatrix@R=0pt{
*{\circ}\ar@{..}[r]&*{\circ}\ar@{-}[r]&*{\circ}\ar@{-}[r]&*{\circ}\ar@{-}[r]&*{\circ}&*{\circ}\ar@{..}[l]\\
*{\circ}\ar@{..}[r]&*{\circ}\ar@{-}[r]&*{\circ}\ar@{-}[r]&*{\circ}\ar@{-}[r]&*{\circ}&*{\circ}\ar@{..}[l]\\
*{\circ}\ar@{..}[r]&*{\circ}&*{\circ}\ar@{-}[r]&*{\circ}&*{\circ}&*{\circ}\ar@{..}[l]\\
*{\circ}\ar@{..}[r]&*{\circ}\ar@{-}[r]&*{\circ}\ar@{-}[r]&*{\circ}&*{\circ}&*{\circ}\ar@{..}[l]\\
*{\circ}\ar@{..}[r]&*{\circ}&*{\circ}\ar@{-}[r]&*{\circ}\ar@{-}[r]&*{\circ}&*{\circ}\ar@{..}[l]\\
*{\circ}&*{\circ}\ar@{..}[r]&*{\circ}\ar@{..}[r]&*{\circ}\ar@{..}[r]&*{\circ}\ar@{..}[r]&*{\circ}\\
*{\circ}\ar@{..}[r]&*{\circ}\ar@{..}[r]&*{\circ}\ar@{..}[r]&*{\circ}\ar@{..}[r]&*{\circ}&*{\circ}
}}
\end{array}
$$
\end{ex}

We say that a pair of segments $(A,B)$ is {\em linked} provided that the segment  $A$ begins strictly before the segment  $B$, the segment $B$ ends strictly after the segment  $A$, and  $A\cup B$ is a segment. This is a slight modification of the classical definition by Zelevinsky (see \cite{Z1} or \cite{Z2}). 
\begin{ex}
In the following table, we give some examples of linked and not linked pairs of multisegments in order to illustrate the various possibilities.  
\begin{center}
\begin{tabular}{|c|c|c|c|c|}
\hline
$$
\xymatrix@C=5pt@R=0pt{
\ar@{-}^A[rrr]&&&&\\
&\ar@{-}_B[rrr]&&&
}
$$
&
$$
\xymatrix@C=5pt@R=0pt{
\ar@{-}^B[rrr]&&&&\\
&\ar@{-}_A[rrr]&&&
}
$$
&
$$
\xymatrix@C=5pt@R=0pt{
\ar@{-}^A[rr]&&*{\circ}&&\\
&&*{\circ}\ar@{-}_B[rr]&&
}
$$
&
$$
\xymatrix@C=5pt@R=0pt{
\ar@{-}^A[rr]&&*{\circ}&*{\circ}\ar@{-}_B[rr]&&
}
$$
&
$$
\xymatrix@C=5pt@R=0pt{
\ar@{-}^A[rr]&&*{\circ}&*{\circ}&*{\circ}\ar@{-}_B[rr]&&
}
$$
\\
\hline
$(A,B)$&$(A,B)$&$(A,B)$&$(A,B)$&$(A,B)$\\
linked&not linked&linked&linked&
not linked
\\\hline
\end{tabular}
\end{center}
\end{ex}

\begin{defn}
A multisegment $M$ is said to have a special full column at $k$ if the $k$-th column of $M$ is full, and there is no linked pair $(A,B)$ of segments of $M$ that contains the column $k$ (i.e., given any two segments $A$ and $B$ containing the column $k$, either $A\subseteq B$ or $B\subseteq A$).
\end{defn}

\begin{lem}
\label{iniettivo R}
    The concatenation sends injectively $\mathcal R_p \times \mathcal R_q$ to the subset of multisegments in $\mathcal R_{p+q}$  having a special full column at $p$.
\end{lem}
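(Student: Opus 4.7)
The plan is to establish two things separately: first, that $M\ast N$ has a special full column at $p$, and second, that the assignment $(M,N)\mapsto M\ast N$ is injective on $\mathcal R_p\times \mathcal R_q$.

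For the first assertion, I would observe that by Definition~\ref{3.1} the segments of $M\ast N$ that contain both $p$ and $p+1$ are exactly the $p+q+1$ paired segments $[i_k, j_k+p]$ arising from category~(3). Since $w_p(M\ast N)=w_{p+1}(M\ast N)=p+q+1$, this multiplicity equals $\min\{w_p,w_{p+1}\}$, so the $p$-th column is full. By Remark~\ref{inclusi uno nellaltro}, those $p+q+1$ crossing segments are linearly ordered by inclusion. A short case analysis shows that any two segments both containing column $p$ must be either nested or linked (because their intersection contains $\{p,p+1\}$, hence is nonempty, so non-nested pairs have $A\cup B$ a segment with $A$ starting strictly before $B$ and $B$ ending strictly after $A$). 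Nestedness therefore precludes the existence of linked pairs crossing column $p$, and column $p$ is special.

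For injectivity, I would construct the inverse map explicitly. Given $L=M\ast N\in\mathcal R_{p+q}$ with $p$ fixed, partition the segments of $L$ into three groups: (a) those with $j<p$; (b) those with $i>p+1$; and (c) the $p+q+1$ segments crossing column $p$, which by the above are nested $[a_1,b_1]\subset\cdots\subset[a_{p+q+1},b_{p+q+1}]$. Group (a) recovers the segments of $M$ not ending at $p$; group (b), shifted by $-p$, recovers the segments of $N$ not starting at $1$. For group (c), the multisets $\{[a_k,p]\}_k$ and $\{[1,b_k-p]\}_k$ are, by Definition~\ref{3.1}, precisely the segments ending at $p$ of the augmented $M$ and starting at $1$ of the augmented $N$, respectively; removing the $q$ prescribed copies of $[1,p]$ and $p$ prescribed copies of $[1,q]$ introduced by the augmentation recovers the remaining segments of $M$ and of $N$.

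The main technical point will be confirming that these subtractions are well-defined, i.e., that $\{[a_k,p]\}_k$ always contains at least $q$ copies of $[1,p]$, and dually that $\{[1,b_k-p]\}_k$ contains at least $p$ copies of $[1,q]$. Both are immediate from Definition~\ref{3.1}, since the augmentation explicitly inserts these copies before the pairing step, forcing at least $q$ of the $a_k$ to equal $1$ (and dually). Consequently the reconstruction is uniquely determined by $L$ and $p$, which yields injectivity.
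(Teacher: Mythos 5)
Your proof is correct and takes essentially the same route as the paper: the paper cites Remark~\ref{inclusi uno nellaltro} for the special full column at $p$ and then declares injectivity ``straightforward,'' while you supply the details by exhibiting the explicit inverse (reconstructing $M$ and $N$ from the segments of $M\ast N$ landing to the left of $p$, to the right of $p+1$, and crossing column $p$, then stripping off the $q$ and $p$ augmentation copies). This is the natural unpacking of what the paper leaves implicit.
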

\begin{proof}
  Let $M\in \mathcal R_p$ and $N \in \mathcal R_q$. By construction (see Remark~\ref{inclusi uno nellaltro}),  $M\ast N$ has a special full column at $p$. The injectivity is straightforward.
\end{proof}

\begin{rmk}
We note that not every multisegment in $\mathcal R_{n}$  having a special full column can be obtained as a non-trivial concatenation, as the multisegments $B$ and $B'$ of Remark~\ref{noliberono}.
\end{rmk}

\begin{prop}
\label{libero graduato R}
The  concatenation $\ast: \mathcal{R}\times \mathcal{R} \rightarrow \mathcal{R}$ makes $\mathcal{R}$ into a graded monoid with the empty multisegment as identity element. The suspension map is an injective self-map of $\mathcal{R}$ with image contained in the subset of primitive elements.
\end{prop}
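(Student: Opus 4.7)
The plan is to verify each of the three assertions in turn.

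Existence of the identity is immediate from the degree--zero case of Definition~\ref{3.1}, which gives $M \ast \emptyset = M = \emptyset \ast M$. The grading compatibility $\deg(M \ast N) = \deg(M) + \deg(N)$ follows by checking $w_k(M \ast N) = p+q+1$ for every $k$: separating the cases $k < p$, $k = p$, $k = p+1$, $k > p+1$, one counts the contributions from the three types of segments in Definition~\ref{3.1} and uses that, by the weight hypothesis on $M$ and $N$, exactly $p+1$ segments of $M$ end at $p$ and exactly $q+1$ segments of $N$ start at $1$, so the prescribed auxiliary copies of $[1,p]$ and $[1,q]$ make the two ordered families match in length $p+q+1$.

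The main technical point, and the genuine obstacle, is associativity $(M \ast N) \ast P = M \ast (N \ast P)$. My plan is to introduce an a~priori ternary concatenation $M \ast N \ast P \in \mathcal{R}_{p+q+r}$ obtained by performing the monotone pairing procedure of Definition~\ref{3.1} simultaneously at the two cutting columns $p$ and $p+q$, and then to verify that either of the iterated binary concatenations produces this same multisegment. The essential observation is that the pairing at column $p$ uses only the endings of $M$ at $p$ and the beginnings of $N$ at $1$ (augmented with $q$ copies of $[1,p]$ and $p$ copies of $[1,q]$), while the pairing at column $p+q$ uses only the endings of $N$ at $q$ and the beginnings of $P$ at $1$ (augmented similarly). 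After the first binary concatenation, say $M \ast N$, the segments of $M \ast N$ ending at $p+q$ are the shifted segments of $N$ ending at $q$ with start $>1$, together with the glued segments of the form $[i_k,p+q]$ that arose from the $M$--$N$ pairing via the entries $[1,q]$ on the $N$-side; this is exactly the data feeding into the second pairing at column $p+q$, and it does not depend on whether $M \ast N$ or $N \ast P$ was formed first. A short bookkeeping check using the monotonicity of the pairing shows the two final multisegments coincide.

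Injectivity of suspension follows from an explicit inverse. For $n \geq 1$, a case analysis on rules~(1)--(4) of Definition~\ref{sospensione} shows that none of them can produce any of the segments $[1,1]$, $[2,n+2]$, $[1,n+1]$, $[n+2,n+2]$ from rule~(5): in each case the start or end index lies outside the allowed range. Hence these four segments appear in $S(M)$ with multiplicity exactly one. The inverse map removes one copy of each and sends any remaining segment $[a,b]$ of $S(M)$ to $[\max(a-1,1),\min(b-1,n)]$, recovering $M$. The $n=0$ case is verified directly from $S(\emptyset) = \{[1,1],[2,2],[1,2]\}$.

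Finally, to show the image of $S$ is contained in the primitives, I appeal to Lemma~\ref{iniettivo R}: a nontrivial factorisation $S(M) = M' \ast N'$ would force a special full column of $S(M)$ at some $k \in [1, n+1]$. For $k = 1$, the segment $[1,1]$ of rule~(5) passes through point $1$ but not through point $2$; since all weights of $S(M)$ equal $n+3$, this obstructs fullness of column~$1$. Symmetrically, $[n+2,n+2]$ obstructs fullness at $k = n+1$. For $k \in [2, n]$ (nonempty when $n \geq 2$), the pair $([1,n+1], [2,n+2])$ from rule~(5) is linked and both segments contain column~$k$, so $k$ fails to be special. The corner cases $n \in \{0, 1\}$ are fully handled by the extreme-column argument, completing the proof.
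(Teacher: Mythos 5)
Your proof is correct and follows essentially the same route as the paper's, which simply declares the monoid axioms and the injectivity of $S$ to be straightforward and then, for primitivity, observes exactly what you do: the first and last columns of $S(M)$ fail to be full because of $[1,1]$ and $[n+2,n+2]$, while every interior column containing $[1,n+1]$ and $[2,n+2]$ (a non-nested, hence linked, pair) fails to be special, so $S(M)$ has no special full column and is primitive by Lemma~\ref{iniettivo R}. You have merely filled in the details the paper elides, including an explicit inverse for $S$ and a sketch of associativity via a ternary concatenation.
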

\begin{proof}
The first statement is straightforward. From the definition, it is straightforward that the suspension map $S$ is injective.  
Let $M\in \mathcal R_n$. The first and the last columns of $S(M)$ are not full by Definition~\ref{sospensione}(\ref{sospensione5}). Furthermore, any other column of $S(M)$, even when full, cannot be special since it would be contained in the segments $[2,n+2]$ and $[1,n+1]$ of $S(M)$. Hence, $S(M)$ has no special full columns, and thus is primitive by Lemma~\ref{iniettivo R}.
\end{proof}

\begin{rmk}
\label{noliberono}
We note that the monoid $\mathcal R$ is not free. For example, let us consider the following multisegments of $\mathcal R$:
$$
A=\vcenter{\xymatrix@R=0pt{
*{\circ}&*{\circ}\\
*{\circ}&*{\circ}\\
*{\circ}&*{\circ}\\
}}\ \ 
\quad
\quad
\quad
\quad
B=\vcenter{\xymatrix@R=0pt{
*{\circ}&*{\circ}\ar@{-}[r]&*{\circ}&*{\circ}\\
*{\circ}&*{\circ}\ar@{-}[r]&*{\circ}&*{\circ}\\
*{\circ}&*{\circ}\ar@{-}[r]&*{\circ}&*{\circ}\\
*{\circ}]&*{\circ}\ar@{-}[r]&*{\circ}\ar@{-}[r]&*{\circ}\\
*{\circ}&*{\circ}\ar@{-}[r]&*{\circ}\ar@{-}[r]&*{\circ}
}}\ \ 
\quad
\quad
\quad
\quad
B'=\vcenter{\xymatrix@R=0pt{
*{\circ}&*{\circ}\ar@{-}[r]&*{\circ}&*{\circ}\\
*{\circ}&*{\circ}\ar@{-}[r]&*{\circ}&*{\circ}\\
*{\circ}&*{\circ}\ar@{-}[r]&*{\circ}&*{\circ}\\
*{\circ}\ar@{-}[r]&*{\circ}\ar@{-}[r]&*{\circ}]&*{\circ}\\
*{\circ}\ar@{-}[r]&*{\circ}\ar@{-}[r]&*{\circ}&*{\circ}
}} \ \ 
$$

$$
C=\vcenter{\xymatrix@R=0pt{
*{\circ}&*{\circ}\ar@{-}[r]&*{\circ}&*{\circ}\ar@{-}[r]&*{\circ}&*{\circ}\\
*{\circ}&*{\circ}\ar@{-}[r]&*{\circ}&*{\circ}\ar@{-}[r]&*{\circ}&*{\circ}\\
*{\circ}&*{\circ}\ar@{-}[r]&*{\circ}&*{\circ}\ar@{-}[r]&*{\circ}&*{\circ}\\
*{\circ}\ar@{-}[r]&*{\circ}\ar@{-}[r]&*{\circ}&*{\circ}\ar@{-}[r]&*{\circ}\ar@{-}[r]&*{\circ}\\
*{\circ}\ar@{-}[r]&*{\circ}\ar@{-}[r]&*{\circ}&*{\circ}\ar@{-}[r]&*{\circ}\ar@{-}[r]&*{\circ}\\
*{\circ}\ar@{-}[r]&*{\circ}\ar@{-}[r]&*{\circ}\ar@{-}[r]&*{\circ}\ar@{-}[r]&*{\circ}\ar@{-}[r]&*{\circ}\\
*{\circ}\ar@{-}[r]&*{\circ}\ar@{-}[r]&*{\circ}\ar@{-}[r]&*{\circ}\ar@{-}[r]&*{\circ}\ar@{-}[r]&*{\circ}
}}\ \ 
$$
The multisegment $C$ has two %inequivalent irreducible 
decompositions into primitive elements, namely $C=A\ast B$ and $C = B' \ast A$.
\end{rmk}

\section{The submonoid $\mathcal M$}
\label{sez4}
In this section, we study a submonoid of $\mathcal R$, which corresponds to the union of flat irreducible loci of all $\mathrm{R}_\mathbf{d}  $'s, i.e. the spaces over which  the linear degenerations of flag varieties live.
It turns out that this locus is very particular also from the combinatorial point of view.

For $n\geq 1$, let $\mathcal M_n$ be the subset of  $\mathcal{R}_n$  of those multisegments having at most one cut per column. We let $\mathcal{M}$ be the union $\coprod_{n\geq 0} \mathcal{M}_n$, where $\mathcal{M} _0$ is the singleton  containing  the empty multisegment.

We now introduce the restrictions of multisegments. Loosely speaking, the left (respectively, right) restriction of a multisegment $M$ consists of keeping only the first (respectively, last) $k$ vertices of $M$ and forgetting the appropriate number of full segments in order to obtain a multisegment in $\mathcal M_k$.
\begin{defn}
\label{restrizione}
Let $M\in \mathcal M_n$ and $k\in \mathbb N$ with $1\leq k\leq n$. The left restriction of $M$ to $k$, denoted $_{(k)}M$, is the multisegment in $\mathcal M_{k}$ 
$$ _{(k)}M = M' \setminus \{\underbrace{[1,k], [1,k], \ldots, [1,k]}_{(n-k) \text{ times}}  \} $$
where $M'$ is the multisegment containing the following segments:
\begin{enumerate}
        \item $[i,j]$ for each $[i,j]$ in $M$ with $j\leq k$,
        \item $[i,k]$ for each $[i,j]$ in $M$ with $i\leq k$ and $j>k$.
\end{enumerate}
The right restriction of $M$ to $k$, denoted $M_{(k)}$, is the multisegment in $\mathcal M_{k}$ 
$$ M_{( k)} = M'' \setminus \{\underbrace{[1,k], [1,k], \ldots, [1,k]}_{(n-k) \text{ times}}  \} $$
where $M''$ is the multisegment containing the following segments:
\begin{enumerate}
        \item $[i-n+k,j-n+k]$ for each $[i,j]$ in $M$ with $i\geq n-k+1$,
        \item $[1,j-n+k]$ for each $[i,j]$ in $M$ with $i < n-k+1$ and $j \geq n-k+1$.
\end{enumerate}
\end{defn}
Notice that Definition~\ref{restrizione} is well defined. As for $_{(k)}M$, the multisegment $M'$ has length $k$ and weight $(n+1, \ldots, n+1)$ and, since $M\in \mathcal M$, it has at most one cut per column and hence contains at least $n-k$ copies of the segment $[1,k]$. Once $n-k$ copies of $[1,k]$ are removed, one obtains a multisegment in $\mathcal M_k$. An analogous argument holds for $M_{( k)}$.

Also notice that Definition~\ref{restrizione} cannot be given for $M$ in $\mathcal R$ since there would not be a natural choice for the multisegments to remove.

\begin{rmk}
\label{vice versa}
We notice that, if a multisegment $M\in \mathcal{M}_n$ has a special full column at $p$, then
$$M= \; _{(p)}M \ast M_{(n-p)}.$$
Moreover, in this case, $_{(p)}M$ has a special full column if and only if $M$ has also another special full column at $p'$, with $p'<p$.
Analogously for  $ M_{ (n-p)}$. Note that the statement does not continue to hold when $\mathcal M$ is replaced by $\mathcal R$ (see Remark~\ref{noliberono}).
\end{rmk}

\begin{lem}
\label{iniettivo}
The concatenation sends bijectively $\mathcal M_p \times \mathcal M_q$ to the subset of multisegments in $\mathcal M_{p+q}$  having a special full column at $p$.
\end{lem}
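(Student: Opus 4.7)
The plan is to combine Lemma~\ref{iniettivo R} with a column-by-column cut analysis and the factorization provided by Remark~\ref{vice versa}. Lemma~\ref{iniettivo R} already asserts that concatenation gives an injection from $\mathcal R_p \times \mathcal R_q$ into the set of multisegments in $\mathcal R_{p+q}$ having a special full column at $p$; hence its restriction to $\mathcal M_p \times \mathcal M_q$ is automatically injective. The two remaining tasks are (a) to verify that $M \ast N$ lies in $\mathcal M_{p+q}$ whenever $M \in \mathcal M_p$ and $N \in \mathcal M_q$, and (b) to show that every element of $\mathcal M_{p+q}$ having a special full column at $p$ is in the image.

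For (a), I would check the ``at most one cut per column'' condition one column at a time. By construction, the $p$-th column of $M \ast N$ is full, so it contributes no cuts. For a column $k < p$, Definition~\ref{3.1} shows that the segments of $M \ast N$ containing $\{k,k+1\}$ are in bijection with those of $M \uplus \{q \text{ copies of } [1,p]\}$ containing $\{k,k+1\}$: segments of $M$ ending strictly before $p$ are kept unchanged, while each segment ending at $p$ (either originally in $M$ or among the added copies of $[1,p]$) is extended past column $p$ as exactly one glued segment. A weight bookkeeping then yields that the number of cuts of $M \ast N$ at column $k$ equals the number of cuts of $M$ at column $k$, and is therefore at most one. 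The argument for columns $k > p$ is symmetric, passing through $N$ and the corresponding $p$ added copies of $[1,q]$.

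For (b), I would invoke Remark~\ref{vice versa} directly: if $M \in \mathcal M_{p+q}$ has a special full column at $p$, then $M = {}_{(p)}M \ast M_{(q)}$. The restriction operation of Definition~\ref{restrizione} is well-defined precisely on $\mathcal M$ because the at-most-one-cut-per-column property guarantees the presence of enough copies of the full segment $[1,k]$ to be removed in the construction. Hence ${}_{(p)}M \in \mathcal M_p$ and $M_{(q)} \in \mathcal M_q$, and $M$ is exhibited as a concatenation of two elements of $\mathcal M$, establishing surjectivity.

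The main obstacle is the bookkeeping in step (a): one must pair segments carefully across the gluing and verify that the $q$ added copies of $[1,p]$ together with the glued segments produce the expected weight at every column, so that the count of cuts in $M \ast N$ at column $k < p$ truly reproduces the count of cuts of $M$ at $k$. This is routine but needs attention, as does the analogous inspection on the $N$-side; once these matchings are made precise, the conclusion follows immediately from the two preceding results.
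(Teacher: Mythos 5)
Your proof is correct and follows the same route as the paper: injectivity is inherited from Lemma~\ref{iniettivo R}, and surjectivity is given by Remark~\ref{vice versa}. The one place where you are more careful than the paper is step (a): the paper's proof simply says ``By Lemma~\ref{iniettivo R}, it suffices to prove surjectivity, which follows by Remark~\ref{vice versa},'' leaving implicit the verification that $M\ast N$ actually lands in $\mathcal M_{p+q}$ (and not merely in $\mathcal R_{p+q}$) when $M\in\mathcal M_p$ and $N\in\mathcal M_q$. Your column-by-column cut count, matching the segments of $M\ast N$ through a column $k<p$ with those of $M\uplus\{q\text{ copies of }[1,p]\}$ and concluding that the number of cuts at $k$ is preserved, supplies exactly the bookkeeping the paper elides; it checks out and is a worthwhile addition, though in the paper's logic it is treated as immediate from the construction.
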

\begin{proof}
  By Lemma~\ref{iniettivo R}, it suffices to prove surjectivity, which follows by Remark~\ref{vice versa}.
\end{proof}

 The following proposition illustrates algebraic properties of the submonoid $\mathcal M$.  
\begin{prop}
\label{libero graduato}
The following statements hold:
\begin{enumerate}[(i)]
    \item \label{3}  the subset $\mathcal{M} $ of $\mathcal{R}$ is a free graded submonoid  whose primitive elements are the multisegments in $\mathcal M$ with no special full columns;
    \item \label{4} the suspension $S(M)$ is in the submonoid $\mathcal M$ if and only if $M\in \mathcal M$.
\end{enumerate}
\end{prop}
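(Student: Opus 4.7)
The approach for part (i) uses the dictionary between concatenation and special full columns provided by Lemma~\ref{iniettivo} and Remark~\ref{vice versa}; part (ii) follows from a direct computation of the cuts of $S(M)$ using Definition~\ref{sospensione}.

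For (i), closure of $\mathcal M$ under $\ast$ is immediate from Lemma~\ref{iniettivo} (its image lies in $\mathcal M_{p+q}$), the empty multisegment is the identity inherited from $\mathcal R$, and the grading is by length. I would then prove that $M\in\mathcal M$ is primitive if and only if it has no special full column: one direction is Remark~\ref{vice versa} (a special full column at $p$ yields the nontrivial factorization $M={_{(p)}M}\ast M_{(n-p)}$), and the other is Lemma~\ref{iniettivo} (any nontrivial factorization produces a special full column at the join). Existence of a primitive decomposition then follows by recursion: split $M$ at its leftmost special full column, where by Remark~\ref{vice versa} the left factor has no special full column, and recurse on the right factor. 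For uniqueness, given $M=A_1\ast\cdots\ast A_m$ with each $A_i$ primitive, I would argue by induction on $m$ that the special full columns of $M$ are exactly the separator positions $|A_1|,\,|A_1|+|A_2|,\ldots$: these are special full columns by Lemma~\ref{iniettivo}, and by Remark~\ref{vice versa} any additional special full column would have to lie inside some $A_i$, contradicting its primitivity. Thus the separator positions are intrinsic to $M$, and each $A_i$ is recovered by left and right restrictions.

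For (ii), I would compute cuts directly. Since $S(M)$ has constant weight $n+3$, the cut count at column $k$ equals $n+3$ minus the number of segments of $S(M)$ containing $\{k,k+1\}$. A case check over the nine kinds of segments composing $S(M)$ (the four shift types in Definition~\ref{sospensione}(1)--(4), together with the five extra segments in (5)) shows that, for $2\le k\le n$, the segments of $S(M)$ containing $\{k,k+1\}$ are in bijection with those of $M$ containing $\{k-1,k\}$, augmented by the two long segments $[1,n+1]$ and $[2,n+2]$; hence the cut at column $k$ of $S(M)$ equals the cut at column $k-1$ of $M$. At the boundary columns $1$ and $n+1$ an analogous tally gives exactly one cut each, accounted for by the isolated segments $[1,1]$, $[n+2,n+2]$ together with the one-short segments $[1,n+1]$, $[2,n+2]$. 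Thus the ``at most one cut per column'' condition transfers in both directions, giving (ii).

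I do not anticipate a serious obstacle. The trickiest step is the uniqueness part of (i), where one must check that no extraneous special full column slips in between the separators; but this is managed cleanly by inducting on the number of primitive factors and applying Remark~\ref{vice versa} on both sides of the chosen separator.
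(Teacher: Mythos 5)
Your proposal is correct and follows essentially the same route as the paper: part (i) rests on Lemma~\ref{iniettivo} together with the second part of Remark~\ref{vice versa} to get existence and uniqueness of the primitive factorization, and part (ii) is the direct cut count that the paper dismisses as straightforward. Your version simply spells out the details (the identification of special full columns with separator positions, and the column-by-column tally showing interior cuts of $S(M)$ match those of $M$ while the boundary columns get exactly one cut), all of which check out.
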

\begin{proof}
(\ref{3}) Let $M\in \mathcal{M}$. By iterated application of Lemma~\ref{iniettivo}, which is permitted by the second part of Remark~\ref{vice versa}, the multisegment $M$ may be uniquely written as
$$
M = M_1 \ast M_2 \ast \cdots 
\ast M_r,
$$
with the multisegments $M_i$ primitive. 
Hence the statement holds.

(\ref{4}) It is straightforward from the definition of the suspension map that $S(M)$ has at most one cut per column if and only if $M$ has at most one cut per column. Thus the statement follows.
\end{proof}

\begin{rmk}\label{rmksospensione}
Note that a multisegment $M$ in $\mathcal{M} _n$ is a suspension if and only if it contains the segments $[1,n-1]$ and $[2,n]$.
\end{rmk}

This implies the following.

\begin{prop}
\label{unico morfismo}
There exists a unique injective homomorphism of graded monoids from $\mathcal{M}  ot$ to $\mathcal{M}$ commuting with suspension. 
\end{prop}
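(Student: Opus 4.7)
The plan is to deduce this directly from the universal property established in Proposition~\ref{universalita}. To do so, I need to verify that $\mathcal{M}$ satisfies the hypotheses of that proposition, namely that it is a free graded monoid whose degree-$1$ component is a singleton, and that it comes equipped with an injective degree-$2$ suspension map whose image lies in the set of primitive elements.

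First I would check the hypotheses one by one. The fact that $\mathcal{M}$ is a free graded monoid is Proposition~\ref{libero graduato}(\ref{3}); the grading is by length. The set $\mathcal{M}_1$ consists of multisegments of weight $(2)$ over $[1,1]$, and the only such multisegment is $\{[1,1],[1,1]\}$, so $\mathcal{M}_1$ is indeed a singleton. The suspension map of $\mathcal{R}$ restricts to a map $S\colon \mathcal{M}\to\mathcal{M}$ by Proposition~\ref{libero graduato}(\ref{4}), and it raises the length by $2$ by Definition~\ref{sospensione}, so it is a degree-$2$ map. Injectivity of the restricted $S$ is immediate from injectivity of $S$ on $\mathcal{R}$, which is part of Proposition~\ref{libero graduato R}.

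The only remaining hypothesis is that $S(\mathcal{M})$ consists of primitive elements of $\mathcal{M}$. By Proposition~\ref{libero graduato}(\ref{3}), the primitive elements of $\mathcal{M}$ are exactly those with no special full columns, and Proposition~\ref{libero graduato R} already shows that any suspended multisegment has no special full column. Thus $S(\mathcal{M})$ lies in the set of primitive elements of $\mathcal{M}$.

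With all hypotheses verified, parts (\ref{univ1}) and (\ref{univ2}) of Proposition~\ref{universalita} immediately yield a unique graded homomorphism $\Phi\colon \mathcal{M}ot\to\mathcal{M}$ commuting with suspension, and the injectivity of this $\Phi$. I do not foresee any serious obstacle here; the whole work has been done in setting up the algebraic framework of Sections~\ref{sez2}--\ref{sez4}, and the present proposition is essentially a bookkeeping consequence. The one subtle point worth double-checking in the write-up is the identification of $\mathcal{M}_1$ as a singleton, so that the universal property can legitimately be invoked.
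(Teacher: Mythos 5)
Your proof is correct and follows essentially the same route as the paper's: both verify that $\mathcal{M}$ satisfies the hypotheses of Proposition~\ref{universalita} (freeness and grading from Proposition~\ref{libero graduato}(\ref{3}), $\mathcal{M}_1$ a singleton, injective degree-$2$ suspension landing in the primitives via Propositions~\ref{libero graduato R} and \ref{libero graduato}) and then invoke parts (\ref{univ1}) and (\ref{univ2}). The paper's write-up is more terse, but the content is the same; your extra care in identifying $\mathcal{M}_1 = \{\{[1,1],[1,1]\}\}$ and in tracing the ``image in primitives'' claim through the no-special-full-column characterization is a faithful unpacking of the same argument.
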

\begin{proof}
Recall that $\mathcal M_1$ is a  one-element set.

By Proposition~\ref{libero graduato}, the monoid  $\mathcal M$ is  free graded and has an injective suspension map  $S: \mathcal{M} \rightarrow \mathcal{M}$ of degree 2 whose  image is contained in the set of primitive elements of $\mathcal{M}$. Thus the statement follows by Proposition~\ref{universalita}. 
\end{proof}

\section{Excessive multisegments}
\label{sez5}
In this section, we introduce and study a new class of multisegments, which we call {\em excessive}. We prove that the set of excessive multisegments is exactly the image of the map  of Proposition~\ref{unico morfismo}. In particular, we show that, for each $n$ in $\mathbb N$, the excessive multisegments over $[1,n]$ are counted by the $n$-th Motzkin number.

\begin{defn}
\label{triple linkate}
A pair of segments $(A,B)$ is said to be {\em quasi-linked} provided that either
\begin{enumerate}[(i)]
    \item $(A,B)$ is linked, or
    \item $a+1 = b-1$, 
\end{enumerate}
where $a$ is the end of $A$ and $b$ is the beginning of $B$.

A triple of segments $(A,B,C)$ is said to be {\em linked} provided that
\begin{enumerate}[(i)]
    \item $(A,B)$ is linked,
    \item $(B,C)$ is linked,
    \item $(A,C)$ is quasi-linked.
\end{enumerate}
\end{defn}

\begin{defn}
Let $n>0$. A multisegment $M\in \mathcal{M} _n$ is said to be excessive if there are no linked triples of its segments. The set of excessive multisegments over $[1,n]$ is denoted by $\mathcal{E} _n$. Moreover, we let $\mathcal{E} _0=\mathcal{M} _0$ and $\mathcal{E}  = \coprod_{n\geq 0} \mathcal{E} _n$. 
\end{defn}

\begin{lem}
\label{eccesso}
The following statements hold.
\begin{enumerate}[(i)]
    
    \item 
    \label{eccesso1}
    Let $M\in \mathcal R_p$ and $N\in \mathcal R_q$. The concatenation $M\ast N$ is excessive if and only if both $M$ and $N$ are.
    \item 
    \label{eccesso2} Let $M\in \mathcal R_p$. The suspension $S(M)$ is excessive if and only if $M$ is.
    \item 
    \label{eccesso3} The subset $\mathcal{E} $ of $\mathcal R$ is a graded free submonoid stable by suspension whose set of primitives is $S(\mathcal{E} )\coprod \mathcal{E} _1 $.
\end{enumerate}
\end{lem}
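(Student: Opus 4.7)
I handle the three parts in order; (iii) follows mostly formally from (i) and (ii) combined with Proposition~\ref{libero graduato}.

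For (i), the direction ``$M, N$ excessive $\Rightarrow M\ast N$ excessive'' is immediate: a linked triple in $M$ or $N$ transports to one in $M\ast N$, a segment ending at $p$ (resp.\ starting at $1$) being merely replaced by the appropriate crossing segment, an enlargement which preserves the linked and quasi-linked predicates. For the converse, let $(A, B, C)$ be a linked triple in $M\ast N$. Remark~\ref{inclusi uno nellaltro} gives that the segments crossing the junction column $p$ are linearly ordered by inclusion, hence at most one of $A, B, C$ crosses $p$. Since pure-$M$ segments of $M\ast N$ end at $\le p-1$ and pure-$N$ segments start at $\ge p+2$, any configuration with $A$ pure-$M$ and $C$ pure-$N$ would force $c_1 - a_2 \ge 3$, contradicting the quasi-linked bound $c_1 \le a_2 + 2$; this rules out all straddling configurations without a crossing segment. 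When exactly one of $A, B, C$ crosses, a short numerical check excludes the middle segment $B$ (both linked pairs would then require another crossing), and the $M$- or $N$-part of the crossing $A$ (resp.\ $C$), being a genuine segment of the corresponding factor rather than a phantom $[1,p]$ or $[1,q]$, assembles with the other two into a linked triple in $M$ or $N$.

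For (ii), the key structural observation is that, from Definition~\ref{sospensione}, the non-boundary segments of $S(M)$ all start at $1$ or at $\ge 3$ and end at $\le n$ or at $n+2$; in particular none starts at $2$ or ends at $n+1$. Consequently each of the four boundary segments $[1,1], [2,n+2], [1,n+1], [n+2,n+2]$ is blocked from participating in a linked triple: for instance, linkedness of $([1,1],B)$ would force $B = [2,n+2]$, leaving no room for a third segment $C$. Every linked triple of $S(M)$ therefore uses only non-boundary segments, which are in bijection with the segments of $M$ via the shift/extension rule of Definition~\ref{sospensione}. A direct calculation verifies that this bijection preserves ``linked'' and ``quasi-linked'' in both directions, giving the equivalence.

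For (iii), (i) immediately makes $\mathcal E$ a graded submonoid of $\mathcal R$ with the property that a concatenation in $\mathcal M$ lies in $\mathcal E$ iff all its factors do. Combined with the freeness of $\mathcal M$ (Proposition~\ref{libero graduato}), this forces $\mathcal E$ to be free as a graded monoid with set of primitives $\mathcal E\cap P$, where $P$ denotes the set of primitives of $\mathcal M$. Stability under suspension is (ii). The inclusion $S(\mathcal E)\sqcup \mathcal E_1 \subseteq \mathcal E\cap P$ follows from Proposition~\ref{libero graduato} ($S(\mathcal M)\subseteq P$) and (ii), so the substance of (iii) is the reverse inclusion: every primitive $N\in \mathcal E_n$ with $n\ge 2$ must be a suspension, i.e., by Remark~\ref{rmksospensione}, must contain $[1,n-1]$ and $[2,n]$. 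Primitivity excludes special full columns; at columns $1$ and $n-1$ any full column is automatically special (all segments through them share an endpoint and are hence nested), so these columns are each cut, forcing $[1,1], [n,n]\in N$ together with unique segments $[2,m]$ and $[\ell, n-1]$. Assuming $\ell \ge 2$ one derives a contradiction with excessiveness by constructing a linked triple in $N$ involving $[\ell, n-1]$ and using the structural information supplied by primitivity at the interior columns (unique segments at cut columns, linked crossing pairs at full non-special columns); the symmetric argument yields $m = n$. The delicate combinatorial bookkeeping required to build this linked triple in full generality is, I expect, the main obstacle of the lemma.
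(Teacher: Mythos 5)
Your treatment of parts (i) and (ii) is essentially correct and follows the paper's own reasoning: for (i) you localize all three segments of a putative linked triple via the special full column at $p$ and Remark~\ref{inclusi uno nellaltro}, and for (ii) you exclude the four boundary segments from any linked triple and observe that the remaining segments are in linked/quasi-linked-preserving bijection with segments of $M$. These match the paper's proof (which likewise argues that exactly one of $A,B,C$ can cross column $p$, and that $[1,1], [2,n+2], [1,n+1], [n+2,n+2]$ are each blocked from linked triples).

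For part (iii) there is a genuine gap. You correctly reduce the problem: the substance is to show that a primitive $M\in\mathcal E_n$, $n\ge 2$, has no special full columns and must therefore contain $[1,n-1]$ and $[2,n]$ (so is a suspension by Remark~\ref{rmksospensione}). You also correctly observe that the first and last columns must have cuts, giving unique segments $[2,m]$ and $[\ell,n-1]$, and you name the target (show $m=n$, $\ell=1$, else produce a linked triple). But you then stop, writing that ``the delicate combinatorial bookkeeping ... is, I expect, the main obstacle of the lemma'' without constructing the triple. This is precisely the nontrivial content of the lemma, and your sketch gives no mechanism for producing the third segment that completes the triple when $m<n$. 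The paper closes this gap via \emph{induction on $n$ together with the restriction maps of Section 4}: writing $J=[2,j]$ with $j<n$, one observes that not every segment starting at $1$ (other than $[1,1]$) can contain $J$ (else the columns of $J$ would be special full), lets $R=[1,r]$ be the longest such segment not containing $J$, applies the inductive hypothesis to a right restriction of $M$ to conclude $R'=[r+1,n]\in M$, and then $(R,J,R')$ is the required linked triple. Your proposal neither invokes this induction nor supplies a substitute for it, so the key step of (iii) remains unproved.
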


\begin{proof}
(\ref{eccesso1}). 
    Since the assertion  is trivial when $0\in\{p,q\}$, we suppose $0\notin\{p,q\}$.
    
    It is straightforward to see that $M \notin \mathcal E$ (as well as $N \notin \mathcal E$) implies $M\ast N \notin \mathcal E$.  
    
    Let $M,N\in \mathcal E$. Toward a contradiction, suppose that  $M\ast N$  has a linked triple $(A,B,C)$.
    Since $M\ast N$ has a special full column at $p$ and two almost linked segments are not contained in one another, at most one segment among $A$, $B$, and $C$ contain $\{p,p+1\}$. Since a segment ending before $p$ cannot be almost linked with a segment starting after $p+1$, the hypothesis  $M,N\in \mathcal E$ implies that exactly one segment among $A$, $B$, and $C$ contains $\{p,p+1\}$. Thus, either
    \begin{enumerate}
        \item $A$ and $B$ end before $p$, and $\{p,p+1\}\subseteq C$, or (symmetrically)
        \item $B$ and $C$ start after $p+1$, and $\{p,p+1\}\subseteq A$.
    \end{enumerate}
    In the first case, we have that $(A,B,\  _{(p)}\! C)$ is a linked triple of $M$ (recall Definition~\ref{restrizione}), which is a contradiction since $M \in \mathcal E$. In the second case, we may argue analogously.

   (\ref{eccesso2}).  
   Since the assertion holds for $p=0$, we suppose $p>0$.
   
   It is straightforward to see that $M \notin \mathcal E$  implies $S(M) \notin \mathcal E$. 
   
   Let $M \in \mathcal E$. Toward a contradiction, suppose that  $S(M)$  has a linked triple $(A,B,C)$. If $A=[1,1]$, then $B=[2,p+2]$, since $[2,p+2]$ is the unique segment of $S(M)$ that is linked to $[1,1]$, and $C$ is a segment starting from 3, since these are the only segments that are almost linked to $[1,1]$. This is a contradiction since $C$ would be contained in $B$. Thus $[1,1]$ is not part of any linked triple, and analogously for $[p+2, p+2]$. Moreover, the segment $[2,p+2]$ is linked only to segments starting at 1, and is not quasi-linked to any segment; thus it is not contained in any linked triple of $S(M)$. The same holds for $[1,p+1]$. Thus, the linked triple $(A,B,C)$ of $S(M)$ corresponds to a linked triple of $M$, which is a contradiction.

   (\ref{eccesso3}). By (\ref{eccesso1}) and (\ref{eccesso2}), the subset $\mathcal{E} $ of $\mathcal{M} $ is a graded submonoid stable by suspension. 
   
   Let us prove that the  set of primitives of $\mathcal{E} $ is $S(\mathcal{E} ) \coprod \mathcal{E} _1$. The unique element of $\mathcal{E} _1$ is clearly primitive. Let $M\in \mathcal{E} _n$, with $n>0$. By Lemma~\ref{iniettivo} and (\ref{eccesso1}), it is enough to show that if $M$ has no special full columns then $[2,n]\in M$. Indeed, the symmetric argument would show that $[1,n-1]$ is a segment of $M$, and hence, by Remark~\ref{rmksospensione},  $M$ is of the form $S(M')$, for a certain $M'\in\mathcal{E} _{n-2}$. Let us proceed along these lines by induction on $n$.
   
   Let  $M\in\mathcal{E} _n$ and suppose that $M$ has no special full columns. Then the first column must contain a cut, or else it would be special full. It follows that $M$ contains a segment $J$ of the form $J=[2,j]$, for a unique $j$ such that $2<j\leq n$, since $M\in \mathcal M$. We need to prove that $j=n$. Toward a contradiction, suppose 
    $j<n$. Observe that the segments of $M$ starting at $1$ and different from $[1,1]$ cannot all contain $J$,
   because in such case the columns of $J$ would be special full for $M$. Let $R=[1,r]$ be the biggest segment of $M$ among those that start at $1$ and do not contain $J$.  
   
   One has  $M= \; _{(p)}M \ast M_{ (n-p)}$ by Remark~\ref{vice versa}, with  $M_{  (n-p)}\in \mathcal{E} _{n-p}$  by (\ref{eccesso1}).
   The induction hypothesis implies $R'=[r+1, n]\in M$.
   This would give rise to the linked triple $(R, J, R')$ of segments of $M$, which contradicts the assumption that $M$ is excessive. Thus $j=n$ and the assertion follows.
\end{proof}

\begin{rmk}
\label{vice}
Analogously as in Remark~\ref{vice versa}, we note that if an excessive multisegment $E\in \mathcal{E}_n$ has a special full column at $p$, then
$$E=\; _{(p)}E \ast E_{ (n-p)},$$
where both $_{(p)}E$ and $E_{ (n-p)}$ are excessive by Lemma~\ref{eccesso}(\ref{eccesso1}). If instead $E$ has no special full columns, then $E$ is the suspension of its double restriction  $(_{(n-1)}E)_{(n-2)}$, which coincides with $\; _{(n-2)}(E_{(n-1)}) $.
\end{rmk}

\begin{thm}
\label{teorema}
There exists a unique graded monoid isomorphism $\mathcal{M}  ot \rightarrow \mathcal{E} $ which commutes with suspension (which thus coincides with the map of Proposition~\ref{unico morfismo}).
\end{thm}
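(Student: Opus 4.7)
The plan is to obtain the theorem as a direct application of the universal property of $\mathcal{M}ot$ established in Proposition~\ref{universalita}, taking $\mathfrak{M}=\mathcal{E}$. First I would check that $\mathcal{E}$ satisfies the hypotheses of that proposition: by Lemma~\ref{eccesso}(\ref{eccesso3}), the set $\mathcal{E}$ is a free graded monoid, and it is stable under the suspension map, which has degree $2$ and is injective, being the restriction of the injective suspension on $\mathcal{R}$ from Proposition~\ref{libero graduato R}. Moreover, $\mathcal{E}_1=\mathcal{M}_1$ is a singleton, since the only multisegment over $[1,1]$ of weight $(2)$ is $\{[1,1],[1,1]\}$.

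With these hypotheses in hand, Proposition~\ref{universalita}(\ref{univ1}) immediately produces a unique graded monoid homomorphism $\Phi\colon \mathcal{M}ot \to \mathcal{E}$ commuting with suspension; this delivers the uniqueness half of the theorem. For bijectivity, I would invoke Proposition~\ref{universalita}(\ref{univ3}), whose hypothesis is that the set of primitive elements of $\mathcal{E}$ coincides with $S(\mathcal{E})\coprod \mathcal{E}_1$. But this is exactly the content of the last clause of Lemma~\ref{eccesso}(\ref{eccesso3}). Hence $\Phi$ is a graded monoid isomorphism.

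Finally, to verify the parenthetical identification with the map of Proposition~\ref{unico morfismo}, I would compose $\Phi$ with the inclusion $\mathcal{E}\hookrightarrow \mathcal{M}$: the result is a graded monoid homomorphism $\mathcal{M}ot \to \mathcal{M}$ commuting with suspension, so by the uniqueness asserted in Proposition~\ref{unico morfismo} it must coincide with the homomorphism constructed there.

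Since the whole combinatorial content has been extracted in Lemma~\ref{eccesso}, no genuine obstacle remains at this stage; the argument is a purely formal application of the universal property. The only input that is not completely verbatim is the observation that $\mathcal{E}_1$ is a singleton, which is immediate. The conceptually nontrivial work, namely the primitivity characterization of excessive multisegments without special full columns as suspensions, has already been taken care of in part (\ref{eccesso3}) of that lemma.
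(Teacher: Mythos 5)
Your proposal is correct and follows essentially the same route as the paper: observe that $\mathcal E_1$ is a singleton, invoke Lemma~\ref{eccesso}(\ref{eccesso3}) to verify that $\mathcal E$ satisfies the hypotheses of Proposition~\ref{universalita}, and apply the universal property to obtain existence, uniqueness, and bijectivity. Your explicit verification of the parenthetical identification with the map of Proposition~\ref{unico morfismo} is a slightly more careful spelling-out than the paper's terse ``Hence we get the assertion,'' but the substance is the same.
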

\begin{proof}
Observe that $\mathcal E_1$ is a singleton. By Lemma~\ref{eccesso}(\ref{eccesso3}), the monoid $\mathcal E$ satisfies all of the hypotheses of Proposition~\ref{universalita}. Hence we get the assertion.
\end{proof}

The following corollary follows directly from Theorem~\ref{teorema}.
\begin{cor}
The sets $\mathcal E_n$ of excessive multisegments of length $n$ are counted by Motzkin numbers.
\end{cor}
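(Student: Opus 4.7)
The plan is to deduce the corollary essentially immediately from Theorem~\ref{teorema}, which has already done all the substantive work. Theorem~\ref{teorema} provides a graded monoid isomorphism $\Phi \colon \mathcal{M}ot \to \mathcal{E}$. Being graded means that $\Phi$ respects the decompositions $\mathcal{M}ot = \coprod_{n \geq 0} \mathcal{M}ot_n$ and $\mathcal{E} = \coprod_{n \geq 0} \mathcal{E}_n$, so it restricts, for every $n \in \mathbb{N}$, to a bijection $\Phi_n \colon \mathcal{M}ot_n \to \mathcal{E}_n$.

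From this restriction I immediately conclude $|\mathcal{E}_n| = |\mathcal{M}ot_n|$. Since the latter is, by definition (as recalled right after the definition of Motzkin paths in Section~\ref{sez2}), the $n$-th Motzkin number, the corollary follows.

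There is essentially no obstacle to surmount here: all the combinatorial and structural content lies in Lemma~\ref{eccesso} (closure under concatenation and suspension, identification of primitives) and in the universal property of Proposition~\ref{universalita}, which together feed into Theorem~\ref{teorema}. The corollary is a purely numerical reading of the bijection $\Phi$. For clarity I would simply write one or two sentences invoking Theorem~\ref{teorema}, noting that a graded isomorphism induces bijections degree-by-degree, and concluding with $|\mathcal{E}_n| = |\mathcal{M}ot_n|$.
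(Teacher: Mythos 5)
Your argument matches the paper exactly: the corollary is stated as following directly from Theorem~\ref{teorema}, and your observation that a graded monoid isomorphism restricts to a bijection $\mathcal{M}ot_n \to \mathcal{E}_n$ in each degree is precisely the one-line reasoning intended. Correct and complete.
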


\section{Application to geometry of quiver representations}
\label{sez6}

In this section, we give a geometric application of the results obtained in the previous sections. We refer to Section~\ref{sez1} for definitions and notations regarding the geometric setting. Combining the main result of Fang and Reineke \cite{FR} with the tools developed in the previous sections,
 we prove that the supports  for the universal linear degenerations $\pi_n : X_n \rightarrow \mathcal{U}_{n}$ correspond to the excessive multisegments of length $n$. Hence, we provide an explicit characterization of the supports in terms of combinatorial/homological properties of the multisegments. As another application, we give a procedure to invert the parametrization of Fang and Reineke.

We first recall the result of Fang and Reineke \cite{FR}.
Given a multisegment $M \in \mathcal R_n$, we let $r(M)_{i,j}$, for $i,j \leq n$, be the number of segments in $M$ containing both $i$ and $j$. The multisegment $M$ is uniquely determined by the matrix $r(M)$ of the $r(M)_{i,j}$, which is called the {\em rank-tuple of $M$}.  
From \cite{FR}, one may give the following definition.
\begin{defn}
\label{FRmax}
We let  $FR: \mathcal{M}  ot \rightarrow \mathcal{M} $ be the map sending  $\gamma\in \mathcal{M}  ot_n$ to the unique multisegment  $FR(\gamma)\in \mathcal{M} _n$ whose rank-tuple satisfies
$$
r(FR(\gamma))_{i, j} = n+1 - \max_{i\leq k\leq l \leq m \leq j}\{\gamma(l)+\gamma(l-1)-\gamma(m)-\gamma(k-1)\},
$$
for all $i,j$ with $i\leq j$.
\end{defn}

The following is the main result of \cite{FR}.
\begin{thm}[Fang-Reineke]
\label{FR}
The supports are exactly the orbits corresponding to $Im (FR)$.
\end{thm}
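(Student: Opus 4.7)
The plan is to follow the approach of \cite{FR} sketched in the introduction, translating the problem into an algebraic computation in the positive part $U_v^+$ of the quantized enveloping algebra of type $A_n$. The starting observation is that the Decomposition Theorem~(\ref{decomposizione}) reduces the determination of supports to computing the multiplicities $V_\mathcal{O}$: an orbit $\mathcal{O}$ is a support precisely when $V_\mathcal{O}\neq 0$.

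First, one invokes Lusztig's geometric realization of the canonical basis: IC sheaves on closures of $\mathrm{G}_\mathbf{d}$-orbits in $\mathrm{R}_\mathbf{d}$ correspond to canonical basis elements of $U_v^+$, with orbits indexed by multisegments. Under this correspondence, the class of $(\pi_n)_\ast\underline{\mathbb{Q}}_{X_n}[\dim X_n]$ becomes the element of $U_v^+$ obtained as the product of PBW basis vectors attached to the indecomposable summands of the distinguished representation defining $\mathcal{U}_n$, namely the sum of all injective and all projective indecomposables with the projective-injective taken with multiplicity two. The multiplicity $V_\mathcal{O}$ then appears as the coefficient of the canonical basis element indexed by the multisegment of $\mathcal{O}$ in the expansion of this PBW product.

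The central step is to compute this expansion and read off the indices of the nonzero coefficients. Here Knight--Zelevinsky multisegment duality \cite{K-Z} becomes essential: it provides a combinatorial involution on multisegments that transforms the canonical-basis expansion into an equivalent question where nonvanishing can be controlled combinatorially. Tracking this analysis segment by segment produces the rank-tuple formula of Definition~\ref{FRmax}: the maximum in the formula arises naturally as an optimization over configurations attached to a Motzkin path $\gamma$, and the identification of the resulting multisegment with $FR(\gamma)$ yields both the parametrization of supports by $\mathcal{M}  ot_n$ and the explicit description of $Im(FR)$.

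The main obstacle is the explicit expansion in the canonical basis. Such expansions are in general governed by Kazhdan--Lusztig polynomials and are opaque; extracting a closed-form rank-tuple formula requires substantial combinatorial control, which is afforded by the Knight--Zelevinsky duality together with the very rigid structure of the PBW product arising from the specific representation defining $\mathcal{U}_n$. It is precisely this combination that allows nonvanishing of $V_\mathcal{O}$ to be detected by the maximization over Motzkin path data in Definition~\ref{FRmax}; without the duality one would be left only with the decomposition numbers in the form of Kazhdan--Lusztig polynomials, from which the clean parametrization by $\mathcal{M}  ot_n$ would not be visible.
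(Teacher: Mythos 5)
The paper does not prove Theorem~\ref{FR}: it is imported as a citation of the main result of \cite{FR}, and the paper's own contribution is the subsequent theorem identifying $Im(FR)$ with the set $\mathcal{E}$ of excessive multisegments. There is therefore no proof in the paper to compare against.

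Your proposal is a faithful high-level paraphrase of the description of Fang and Reineke's method already given in the paper's introduction (BBD decomposition, Lusztig's geometric realization of the canonical basis via the Hall algebra, expansion of a PBW product, Knight--Zelevinsky multisegment duality). But as written it is a roadmap, not a proof. Every step carrying mathematical weight is narrated rather than executed: you do not actually form or expand the relevant PBW product, you do not state or apply Knight--Zelevinsky duality beyond naming it, and the rank-tuple formula of Definition~\ref{FRmax} is asserted to ``arise naturally'' from the analysis with no derivation. The crucial equivalence — that $V_\mathcal{O}\neq 0$ holds exactly when $\mathcal{O}$ corresponds to a multisegment of the form $FR(\gamma)$ for some Motzkin path $\gamma$ — is precisely the hard content of \cite{FR}, and your text gestures at it without supplying an argument that could be checked. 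If the goal is to reconstitute Fang--Reineke's proof, the technical work (the explicit expansion, the precise role of the duality in controlling nonvanishing, and the derivation of the maximization formula) must be carried out; as it stands, the proposal is a plausible summary of the strategy but contains no proof.
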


Now we establish the explicit characterization of supports.
\begin{thm}
The map $FR$ is the unique graded monoid isoomorphism commuting with suspension.
Consequently,  $$Im (FR) = \mathcal{E} $$ and supports correspond to excessive multisegments.
\end{thm}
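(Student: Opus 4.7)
The plan is to identify $FR$ as the unique graded monoid homomorphism from $\mathcal{M}  ot$ to $\mathcal{M}$ commuting with suspension, whose existence and uniqueness are granted by Proposition~\ref{unico morfismo}. Since Theorem~\ref{teorema} identifies this unique map (composed with the inclusion $\mathcal{E} \hookrightarrow \mathcal{M}$) with the canonical isomorphism $\Phi: \mathcal{M}  ot \to \mathcal{E}$, we obtain at once that $FR$ is the asserted isomorphism and $Im(FR) = \mathcal{E}$. Combined with the Fang--Reineke Theorem~\ref{FR}, this yields that the supports correspond precisely to the excessive multisegments.

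The two properties to verify are therefore: (a) $FR(\gamma \ast \delta) = FR(\gamma) \ast FR(\delta)$, and (b) $FR(S(\gamma)) = S(FR(\gamma))$; the fact that $FR$ lands in $\mathcal{M}$ is incorporated in the target of the map as in Definition~\ref{FRmax}. I would verify (a) and (b) by computing the rank-tuples of both sides from the max formula of Definition~\ref{FRmax} and comparing them entry by entry. For concatenation with $\gamma \in \mathcal{M}  ot_p$ and $\delta \in \mathcal{M}  ot_q$, the essential point is that $(\gamma \ast \delta)(p) = 0$, so in the max expression defining $r(FR(\gamma \ast \delta))_{i,j}$ with $i \leq p < j$, an optimal triple $(k,l,m)$ can be rearranged to decouple across $p$ into a left contribution over $[1,p]$ and a right contribution over $[p+1, p+q]$; the resulting rank-tuple matches that of $FR(\gamma) \ast FR(\delta)$ as described in Definition~\ref{3.1}. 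For suspension, one uses that $S(\gamma)(i) = \gamma(i-1) + 1$ on the interior and vanishes at both endpoints: the index shift in the max formula corresponds to the index shift in Definition~\ref{sospensione}, while the boundary drops to $0$ produce precisely the four extra segments $[1,1]$, $[p+2, p+2]$, $[1, p+1]$, and $[2, p+2]$ prescribed there.

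The main obstacle will be the case analysis inherent to the max-formula manipulations in (a) and (b). For concatenation, verifying the decoupling of $(k, l, m)$ across $p$ requires splitting into subcases based on the positions of $k$, $l$, $m$ relative to $p$ and carefully using $\gamma(p) = \delta(0) = 0$ to ensure that cross-terms drop out harmlessly. For suspension, the delicate entries are those involving the new boundary indices $1$ and $p+2$ in the target multisegment, where the uniform $+1$ interior shift must compensate against the boundary drops to $0$ so as to produce exactly the four new segments and no more. Once (a) and (b) are established, the conclusion follows formally from Proposition~\ref{unico morfismo}, Theorem~\ref{teorema}, and Theorem~\ref{FR}.
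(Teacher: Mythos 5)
Your proposal follows essentially the same approach as the paper: verify that $FR$ commutes with concatenation and suspension by comparing rank-tuples on both sides (the paper writes out the resulting case-by-case formulas and leaves the verification as a routine check, whereas you sketch the decoupling argument behind those formulas), then invoke the uniqueness from Proposition~\ref{unico morfismo}, the isomorphism onto $\mathcal{E}$ from Theorem~\ref{teorema}, and the Fang--Reineke Theorem~\ref{FR} to conclude. The logical skeleton and the reductions to prior results are identical.
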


\begin{proof}
For readability,  let $r(\gamma)_{i,j}$ be the shorthand of $r(FR(\gamma))_{i,j}$, for $\gamma \in \mathcal M ot$.

By Definitions~\ref{3.1} and \ref{FR}, one can check that, if $M \in \mathcal{M} _p$, $N \in \mathcal{M} _q$, $\gamma \in \mathcal{M} ot_p$ and  $\eta\in \mathcal{M} ot_q $, then
$$
r(M\ast N)_{i,j} = \left\{
\begin{array}{ll}
r(M)_{i,j}+q  & \text{if   $1\leq i \leq j \leq p$};\\
r(N)_{i-p,j-p}+p  & \text{if   $p < i \leq j$};\\
\min\{r(M)_{i,p}+q, r(N)_{1,j-p}+p \}  & \text{if   $ i \leq p < j $};
\end{array}
\right. 
$$
and 
$$
r(\gamma\ast\eta)_{i,j} = \left\{
\begin{array}{ll}
r(\gamma)_{i,j}+q  & \text{if   $1\leq i \leq j \leq p$};\\
r(\eta)_{i-p,j-p}+p  & \text{if   $p < i \leq j$};\\
\min\{r(\gamma)_{i,p}+q, r(\eta)_{1,j-p}+p \}  & \text{if   $ i \leq p < j $};
\end{array}
\right. 
$$
for all $i,j$ with $i\leq j$.

By Definitions~\ref{sospensione} and \ref{FR}, one can check that, if $M \in \mathcal{M} $ and  $\gamma\in \mathcal{M} ot$, then
$$
r(S(M))_{i,j} = \left\{
\begin{array}{ll}
r(M)_{i-1,j-1}+2  & \text{if   $1 < i \leq j < p+2$};\\
r(M)_{1,j-1}+1  & \text{if   $1 = i \leq j < p+2$};\\
r(M)_{i-1,p}+1  & \text{if   $1 < i \leq j = p+2$};\\
r(M)_{1,p}  & \text{if   $1 = i, j = p+2$};\\
\end{array}
\right. 
$$
and
$$
r(S(\gamma))_{i,j} = \left\{
\begin{array}{ll}
r(\gamma)_{i-1,j-1}+2  & \text{if   $1 < i \leq j < p+2$};\\
r(\gamma)_{1,j-1}+1  & \text{if   $1 = i \leq j < p+2$};\\
r(\gamma)_{i-1,p}+1  & \text{if   $1 < i \leq j = p+2$};\\
r(\gamma)_{1,p}  & \text{if   $1 = i, j = p+2$};\\
\end{array}
\right. 
$$
for all $i,j$ with $i\leq j$.

Hence the first statement follows.

By Theorem~\ref{teorema} and the first statement, the map $FR$ is an isomorphism from $\mathcal{M}ot$ to $\mathcal E$. Now, the last statement follows by Theorem~\ref{FR}.
\end{proof}

As a final application, we consider the inverse $FR^{-1} : \mathcal E \mapsto \mathcal {M}ot$ of the Fang-Reineke map on its image. In this order of ideas,  note that, since $FR^{-1}(E_1\ast E_2)=FR^{-1}(E_1) \ast FR^{-1}( E_2)$ and  $FR^{-1}(S(E))= S(FR^{-1}(E))$, we have a recursive  procedure to compute $FR^{-1}$. Theorem~\ref{iniettivo} and Lemma~\ref{eccesso}  make this procedure very explicit. 

Given $E \in \mathcal E_n$, find its special full columns and write $E$ as the concatenation of primitive elements (which are restrictions by Remark~\ref{vice}). Each of these primitive elements is either the unique multisegment of length 1, or a suspension (of its double restriction, again by Remark~\ref{vice}). Then iterate these steps. %This procedure stops after a finite number of steps. 
As initial data, we have that the Motzkin paths corresponding to the empty multisegment (i.e., the identity element $id$ of the monoid $\mathcal E$) and to the unique multisegment of $\mathcal E_1$ (i.e., $\vcenter{\xymatrix@R=0pt{
*{\circ}\\
*{\circ}
}}$) are the unique Motzkin path of length 0 and the unique Motzkin path of length 1,  respectively.

\begin{ex} Let $M$ be the following multisegment in $\mathcal E_9$:
$$
M=\vcenter{\xymatrix@R=0pt{
*{\circ}\ar@{-}[r]&*{\circ}&*{\circ}\ar@{-}[r]&*{\circ}&*{\circ}\ar@{-}[r]&*{\circ}\ar@{-}[r]&*{\circ}\ar@{-}[r]&*{\circ}\ar@{-}[r]&*{\circ}\\
*{\circ}\ar@{-}[r]&*{\circ}\ar@{-}[r]&*{\circ}\ar@{-}[r]&*{\circ}\ar@{-}[r]&*{\circ}&*{\circ}\ar@{-}[r]&*{\circ}&*{\circ}\ar@{-}[r]&*{\circ}\\
*{\circ}\ar@{-}[r]&*{\circ}\ar@{-}[r]&*{\circ}\ar@{-}[r]&*{\circ}\ar@{-}[r]&*{\circ}\ar@{-}[r]&*{\circ}\ar@{-}[r]&*{\circ}\ar@{-}[r]&*{\circ}&*{\circ}\\
*{\circ}\ar@{-}[r]&*{\circ}\ar@{-}[r]&*{\circ}\ar@{-}[r]&*{\circ}\ar@{-}[r]&*{\circ}\ar@{-}[r]&*{\circ}\ar@{-}[r]&*{\circ}\ar@{-}[r]&*{\circ}\ar@{-}[r]&*{\circ}\\
*{\circ}\ar@{-}[r]&*{\circ}\ar@{-}[r]&*{\circ}\ar@{-}[r]&*{\circ}\ar@{-}[r]&*{\circ}\ar@{-}[r]&*{\circ}\ar@{-}[r]&*{\circ}\ar@{-}[r]&*{\circ}\ar@{-}[r]&*{\circ}\\
*{\circ}\ar@{-}[r]&*{\circ}\ar@{-}[r]&*{\circ}\ar@{-}[r]&*{\circ}\ar@{-}[r]&*{\circ}\ar@{-}[r]&*{\circ}\ar@{-}[r]&*{\circ}\ar@{-}[r]&*{\circ}\ar@{-}[r]&*{\circ}\\
*{\circ}\ar@{-}[r]&*{\circ}\ar@{-}[r]&*{\circ}\ar@{-}[r]&*{\circ}\ar@{-}[r]&*{\circ}\ar@{-}[r]&*{\circ}\ar@{-}[r]&*{\circ}\ar@{-}[r]&*{\circ}\ar@{-}[r]&*{\circ}\\
*{\circ}\ar@{-}[r]&*{\circ}\ar@{-}[r]&*{\circ}\ar@{-}[r]&*{\circ}\ar@{-}[r]&*{\circ}\ar@{-}[r]&*{\circ}\ar@{-}[r]&*{\circ}\ar@{-}[r]&*{\circ}\ar@{-}[r]&*{\circ}\\
*{\circ}\ar@{-}[r]&*{\circ}\ar@{-}[r]&*{\circ}\ar@{-}[r]&*{\circ}\ar@{-}[r]&*{\circ}\ar@{-}[r]&*{\circ}\ar@{-}[r]&*{\circ}\ar@{-}[r]&*{\circ}\ar@{-}[r]&*{\circ}\\
*{\circ}\ar@{-}[r]&*{\circ}\ar@{-}[r]&*{\circ}\ar@{-}[r]&*{\circ}\ar@{-}[r]&*{\circ}\ar@{-}[r]&*{\circ}\ar@{-}[r]&*{\circ}\ar@{-}[r]&*{\circ}\ar@{-}[r]&*{\circ}\\
}}
$$
The segment $M$ has two special full columns, the first and the third (observe that the sixth column is full but not special full). Let us first consider, for instance, the third column. We have
$$
M= \; _{(3)}M \ast M_{(6)}=\vcenter{\xymatrix@R=0pt{
*{\circ}\ar@{-}[r]&*{\circ}&*{\circ}\\
*{\circ}\ar@{-}[r]&*{\circ}\ar@{-}[r]&*{\circ}\\
*{\circ}\ar@{-}[r]&*{\circ}\ar@{-}[r]&*{\circ}\\
*{\circ}\ar@{-}[r]&*{\circ}\ar@{-}[r]&*{\circ}\\
}}
\ast
\vcenter{\xymatrix@R=0pt{
*{\circ}&*{\circ}\ar@{-}[r]&*{\circ}\ar@{-}[r]&*{\circ}\ar@{-}[r]&*{\circ}\ar@{-}[r]&*{\circ}\\
*{\circ}\ar@{-}[r]&*{\circ}&*{\circ}\ar@{-}[r]&*{\circ}&*{\circ}\ar@{-}[r]&*{\circ}\\
*{\circ}\ar@{-}[r]&*{\circ}\ar@{-}[r]&*{\circ}\ar@{-}[r]&*{\circ}\ar@{-}[r]&*{\circ}&*{\circ}\\
*{\circ}\ar@{-}[r]&*{\circ}\ar@{-}[r]&*{\circ}\ar@{-}[r]&*{\circ}\ar@{-}[r]&*{\circ}\ar@{-}[r]&*{\circ}\\
*{\circ}\ar@{-}[r]&*{\circ}\ar@{-}[r]&*{\circ}\ar@{-}[r]&*{\circ}\ar@{-}[r]&*{\circ}\ar@{-}[r]&*{\circ}\\
*{\circ}\ar@{-}[r]&*{\circ}\ar@{-}[r]&*{\circ}\ar@{-}[r]&*{\circ}\ar@{-}[r]&*{\circ}\ar@{-}[r]&*{\circ}\\
*{\circ}\ar@{-}[r]&*{\circ}\ar@{-}[r]&*{\circ}\ar@{-}[r]&*{\circ}\ar@{-}[r]&*{\circ}\ar@{-}[r]&*{\circ}\\
}}
$$
Considering the special full column of the first factor above, we write $M$ as the concatenation of three segments with no special full columns (hence primitive):
$$
M= \vcenter{\xymatrix@R=0pt{
*{\circ}\\
*{\circ}
}}
\ast\vcenter{\xymatrix@R=0pt{
*{\circ}&*{\circ}\\
*{\circ}\ar@{-}[r]&*{\circ}\\
*{\circ}\ar@{-}[r]&*{\circ}\\
}}
\ast
\vcenter{\xymatrix@R=0pt{
*{\circ}&*{\circ}\ar@{-}[r]&*{\circ}\ar@{-}[r]&*{\circ}\ar@{-}[r]&*{\circ}\ar@{-}[r]&*{\circ}\\
*{\circ}\ar@{-}[r]&*{\circ}&*{\circ}\ar@{-}[r]&*{\circ}&*{\circ}\ar@{-}[r]&*{\circ}\\
*{\circ}\ar@{-}[r]&*{\circ}\ar@{-}[r]&*{\circ}\ar@{-}[r]&*{\circ}\ar@{-}[r]&*{\circ}&*{\circ}\\
*{\circ}\ar@{-}[r]&*{\circ}\ar@{-}[r]&*{\circ}\ar@{-}[r]&*{\circ}\ar@{-}[r]&*{\circ}\ar@{-}[r]&*{\circ}\\
*{\circ}\ar@{-}[r]&*{\circ}\ar@{-}[r]&*{\circ}\ar@{-}[r]&*{\circ}\ar@{-}[r]&*{\circ}\ar@{-}[r]&*{\circ}\\
*{\circ}\ar@{-}[r]&*{\circ}\ar@{-}[r]&*{\circ}\ar@{-}[r]&*{\circ}\ar@{-}[r]&*{\circ}\ar@{-}[r]&*{\circ}\\
*{\circ}\ar@{-}[r]&*{\circ}\ar@{-}[r]&*{\circ}\ar@{-}[r]&*{\circ}\ar@{-}[r]&*{\circ}\ar@{-}[r]&*{\circ}\\
}}
$$
The first factor is the unique multisegment in $\mathcal E_1$. The second is the suspension of the empty multisegment $id$. The third is the suspension of the multisegment
$$
N= \vcenter{\xymatrix@R=0pt{
*{\circ}\ar@{-}[r]&*{\circ}\ar@{-}[r]&*{\circ}\ar@{-}[r]&*{\circ}\\
*{\circ}&*{\circ}\ar@{-}[r]&*{\circ}&*{\circ}\\
*{\circ}\ar@{-}[r]&*{\circ}\ar@{-}[r]&*{\circ}\ar@{-}[r]&*{\circ}\\
*{\circ}\ar@{-}[r]&*{\circ}\ar@{-}[r]&*{\circ}\ar@{-}[r]&*{\circ}\\
*{\circ}\ar@{-}[r]&*{\circ}\ar@{-}[r]&*{\circ}\ar@{-}[r]&*{\circ}\\
}}
$$
The multisegment $N$ has a special full column and satisfies
$$
N=\vcenter{\xymatrix@R=0pt{
*{\circ}&*{\circ}\\
*{\circ}\ar@{-}[r]&*{\circ}\\
*{\circ}\ar@{-}[r]&*{\circ}\\
}}
\ast
\vcenter{\xymatrix@R=0pt{
*{\circ}&*{\circ}\\
*{\circ}\ar@{-}[r]&*{\circ}\\
*{\circ}\ar@{-}[r]&*{\circ}\\
}}
$$
As already noted, both factors of $N$ are the suspension of the empty segment $id$. 
Hence
$$
M= \vcenter{\xymatrix@R=0pt{
*{\circ}\\
*{\circ}
}}
\ast
S( id )
\ast
S\big( S(id) \ast S(id) \big)
$$

and 
$$
FR^{-1}(M)= 
\vcenter{\xymatrix@C=2pt@R=2pt{
&&&&&&&&&&&\\
&{\circ}&{\circ}&{\circ}&{\circ}&{\circ}&*-{\bullet}&{\circ}&*-{\bullet}&{\circ}&{\circ}\\
&{\circ}&{\circ}&*-{\bullet}&{\circ}&*-{\bullet}&{\circ}&*-{\bullet}&{\circ}&*-{\bullet}&{\circ}\\
\ar@{-}[rrrrrrrrrr] 
\ar@{-}'[rr]'[rrru]'[rrrr]'[rrrrru]'[rrrrrruu]'[rrrrrrru]'[rrrrrrrruu]'[rrrrrrrrru]'[rrrrrrrrrr]&
*-{\bullet}&*-{\bullet}&{\circ}&*-{\bullet}&{\circ}&{\circ}&{\circ}&{\circ}&{\circ}&*-{\bullet}& &\\
&\ar@{-}[uuuu]&&&&&&&&&&
%{\tiny{0}}&{\small 1}&{\circ}&{\circ}&{\circ}&{\circ}&{\circ}&{\circ}&{\circ}&{\circ}&{\circ}
}}
$$
\end{ex}

\end{document}